\theoremstyle{plain}
\newtheorem{theorem}{Theorem}[section]
\newtheorem{proposition}[theorem]{Proposition}
\newtheorem{lemma}[theorem]{Lemma}
\newtheorem{corollary}[theorem]{Corollary}
\theoremstyle{definition}
\theoremstyle{remark}
\renewcommand{\thefootnote}{\arabic{footnote}}
\def\R{\mathbb R}% tap so thuc
\def\Z{\mathbb Z}% tap so nguyen
\def\al{\alpha}% alpha
\def\om{\omega}% omega
\def\Om{\Omega}% Omega
\def\be{\beta}% beta
\def\ga{\gamma}% gamma
\def\ge{\geq}% lon hon
\def\de{\delta}% delta
\def\De{\Delta} % Delta
\def\Gam{\Gamma}
\def\si{\sigma}% sigma
\def\lam{\lambda}% lambda
\def\vphi{\varphi}% varphi
\def\ep{\epsilon}% epsilon
\def\na{\nabla}% nabla
\def\pa{\partial}% dao ham rieng
\def\la{\langle} %goc trai
\def\ra{\rangle} %goc phai
\def\lt{\left}% trai
\def\rt{\right}% phai
\def\o{\overline}
\def\mC{\mathcal{C}}
\def\mH{\mathcal H}
\def\dHa{\dot{H}^\al(\R^n)}
\def\dHs{\dot{H}^{\frac{1-s}{2}}(\R^n)}
\def\dWds{\dot{W}(d_\mC^s,\mC)}
\def\hs{\R^{n+1}_+}
\def\sp{\R^n}
\def\spp{\R^{n+1}}
\def\d{\mathrm{div}}
\def\Rin{R_{\mathrm{in}}}
\def\OR{\Om\times \R}
\def\ORn{\R^n_+\times \R}
\def\COR{C_0^\infty(\OR)}
\def\CORn{C_0^\infty(\ORn)}
\def\Rn{\R^n_+}
\def\i0i{\int_0^\infty}
\numberwithin{equation}{section}
\title{Some trace Hardy type inequalities and trace Hardy-Sobolev-Maz'ya type inequalities}
\author{Van Hoang Nguyen\footnote{
School of Mathematical Sciences, Tel Aviv University, Tel Aviv 69978, \textsc{Israel}.}}
\begin{document}
\maketitle

%% Classification and key words; note that the 2010 classification is used:

\renewcommand{\thefootnote}{}

\footnote{Email: vanhoang0610@yahoo.com}

\footnote{Supported by a grant from the European Research Council (grant number $305629$)}

\footnote{2010 \emph{Mathematics Subject Classification\text}: 26D10, 46E35.}

\footnote{\emph{Key words and phrases\text}: Trace Hardy type inequality, trace Hardy-Sobolev-Maz'ya type inequality, logarithmic Hardy trace inequality, logarithmic Sobolev trace inequality.}

\renewcommand{\thefootnote}{\arabic{footnote}}
\setcounter{footnote}{0}

\begin{abstract}
We prove a trace Hardy type inequality with the best constant on the polyhedral convex cones which generalizes recent results of Alvino et al. and of Tzirakis on the upper half space. We also prove some trace Hardy-Sobolev-Maz'ya type inequalities which generalize the recent results of Filippas et al.. In applications, we derive some Hardy type inequalities and Hardy-Sobolev-Maz'ya type inequalities for fractional Laplacian. Finally, we prove the logarithmic Sobolev trace inequalities and logarithmic Hardy trace inequalities on the upper half spaces. The best constants in these inequalities are explicitly computed in the radial case.

\end{abstract}
%\clearpage
%\tableofcontents

%\clearpage
\section{Introduction}
Let $n\geq 2$. The Hardy inequality on the upper half space $\hs =\{(x,t)\in \R^{n+1}\, :\, x\in \sp,\, t >0\}$ says that
\begin{equation}\label{eq:Hardy}
\int_{\hs}|\na u(x,t)|^2 dxdt \geq \frac{(n-1)^2}4 \int_{\hs} \frac{u(x,t)^2}{|x|^2+t^2} dx dt,
\end{equation} 
for any function $u\in C_0^\infty(\R^{n+1})$. By the density, the inequality \eqref{eq:Hardy} still holds for all function $u$ in $\dot{W}^{1,2}(\R^{n+1}_+)$ which is the completion under the norm $\|u\|_{\dot{W}^{1,2}(\R^{n+1}_+)} = (\int_{\hs}|\na u(x,t)|^2 dx dt)^{1/2}$ of the space of functions which are restriction to $\hs$ of functions in $C_0^\infty(\R^{n+1})$. The constant $(n-1)^2/4$ is sharp and never attains in $\dot{W}^{1,2}(\hs)$.

Another important inequality is the Hardy trace inequality on the upper half space $\hs$ (or Kato inequality) which asserts that 
\begin{equation}\label{eq:Kato}
\int_{\hs}|\na u(x,t)|^2 dxdt \geq 2 \lt(\frac{\Gamma(\frac{n+1}4)}{\Gamma(\frac{n-1}4)}\rt)^2 \int_{\pa\hs}\frac{u(x,0)^2}{|x|} dx,
\end{equation}
for any function $u \in \dot{W}^{1,2}(\hs)$. The constant $2 \Gamma((n+1)/4)^2/\Gamma((n-1)/4)^2$ also is sharp and never attains in $\dot{W}^{1,2}(\hs)$.

Recently, Alvino et al.  have proved an interesting inequality in \cite{AVF12} which interpolates between \eqref{eq:Hardy} and \eqref{eq:Kato}. This inequality states that for any $2\leq \beta \leq n+1$, it holds
\begin{align}\label{eq:AVF}
\int_{\hs}|\na u(x,t)|^2 dxdt & \geq \frac{(\be-2)^2}4 \int_{\hs} \frac{u(x,t)^2}{|x|^2+t^2} dx dt  + H(n,\beta)\int_{\pa\hs}\frac{u(x,0)^2}{|x|} dx,
\end{align}
for any function $u\in \dot{W}^{1,2}(\hs)$, with
\[
H(n,\beta) = 2 \frac{\Gamma(\frac{n+\be-1}4)\Gamma(\frac{n-\be+3}4)}{\Gamma(\frac{n+\be-3}4)\Gamma(\frac{n+1-\beta}4)}.
\]
Again, the constant $H(n,\beta)$ is sharp and never attains in $\dot{W}^{1,2}(\hs)$. The inequality \eqref{eq:AVF} was recently generalized by Tzirakis in \cite{T15}.

Our first aim of this paper is to extend the inequality \eqref{eq:AVF} for any polyhedral cone convex $\mC$ defined by
\begin{equation}\label{eq:cone}
\mC =\lt\{x\in \R^{n+1}\, :\, \la x, u_i\ra > 0,\quad i = 1,\ldots, m\rt\},
\end{equation}
where $u_1,\ldots, u_m$ are unit vectors in $\R^{n+1}$, $m\geq 1$. For $x\in \mC$, let us denote $d_\mC(x)$ the distance from $x$ to the boundary of $\mC$, i.e,
$$d_\mC(x) =\text{dist}(x,\pa\mC) =\min_{1\leq i\leq m} \la x,u_i\ra.$$
Given $s\in (-1,1)$, we define the weighted Sobolev space $\dWds$ to be the completion of $C_0^\infty(\o{\mC})$ under the norm
$$\|u\|_{\dWds} = \lt(\int_\mC |\na u(x)|^2 d_\mC(x)^s dx\rt)^{\frac12}.$$
We say that a function $u$ belongs to $C_0^\infty(\o{\mC})$ if it is a restriction of a compactly supported smooth function on $\spp$ to $\mC$. Our extension of \eqref{eq:AVF} to the polyhedral convex cone $\mC$ is as follows
\begin{theorem}\label{maintheorem}
Let $n\geq 2$, $s \in (-1,1)$, then for any $2\leq \be < n_s:=n+1+s$, there exists a constant $H(n,s,\be)$ such that
\begin{equation}\label{eq:maininequality}
\int_\mC |\na u|^2 d_\mC^s dx \geq \frac{(\beta-2)^2}4 \int_\mC \frac{|u(x)|^2}{|x|^2} d_\mC(x)^s dx + H(n,s,\be) \int_{\pa \mC}\frac{u(x)^2}{|x|^{1-s}} d\mH^n(x),
\end{equation}
where
\begin{equation}\label{eq:constantH}
H(n,s,\be) = 2\frac{\Gam\lt(\frac{1+s}2\rt) \Gam\lt(\frac{n_s+\be-2-2s}4\rt)\Gam\lt(\frac{n_s-\be+2-2s}4\rt)}{\Gam\lt(\frac{1-s}2\rt)\Gam\lt(\frac{n_s+\be-4}4\rt)\Gam\lt(\frac{n_s-\be}4\rt)},
\end{equation}
for any $u\in \dWds$, where $\mH^n$ is the $n-$dimensional Hausdorff measure on $\pa \mC$. Moreover, the constant $H(n,s,\be)$ is optimal.
\end{theorem}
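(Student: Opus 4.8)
The plan is to reduce the problem to a one‑dimensional calculation via a change of unknown that factors out the expected singular behavior near the vertex and the boundary. The key observation is that the distance function $d_\mC$ and the radius $|x|$ are the two competing scales, so I would first perform a substitution of the form $u(x) = |x|^{-(\be-2)/2} v(x)$ (or more generally $u = |x|^{-\gamma} v$ with $\gamma$ chosen to produce the coefficient $(\be-2)/2$ in the first term on the right). Expanding $|\na u|^2 d_\mC^s$ and integrating by parts should generate exactly the Hardy bulk term $\frac{(\be-2)^2}{4}\int_\mC |u|^2 |x|^{-2} d_\mC^s\,dx$ plus a nonnegative remainder and a boundary contribution coming from the weight $d_\mC^s$. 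Because $\mC$ is a polyhedral convex cone, $d_\mC$ is locally $\la x,u_i\ra$ on each face and $\na d_\mC = -u_i$ there, so the boundary integration by parts is clean and the distributional Laplacian of $d_\mC^{s}$‑type quantities is controlled by convexity (the relevant second‑order term has a favorable sign).

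\medskip

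\emph{The core reduction} is then to separate variables. Near a fixed boundary face I would introduce coordinates $(y,\tau)$ with $\tau = \la x,u_i\ra = d_\mC$ the normal variable and $y$ the tangential ones, and use polar‑type coordinates $r=|x|$ together with the angular variables on the sphere. The weight $d_\mC^s$ and the homogeneity of every term ($|\na u|^2 d_\mC^s$, $|u|^2|x|^{-2}d_\mC^s$, and the boundary density $|x|^{s-1}$ all scale consistently under $x\mapsto \lam x$) mean the inequality is scale invariant, so the sharp constant is governed by the spectrum of an associated operator on the cross‑section $\mC\cap \mathbb S^{n}$ with the weight $(\text{dist to }\pa)^s$. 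Testing on the ground state of this problem—equivalently, taking $u$ radial times the first eigenfunction—should give the extremal profile, and the optimal $H(n,s,\be)$ emerges as the boundary Rayleigh quotient evaluated on it.

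\medskip

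\emph{The Gamma‑function value} of $H(n,s,\be)$ strongly suggests that the sharp one‑dimensional model is solved by hypergeometric/Jacobi or Bessel functions: after separation of variables the radial problem reduces to an ODE whose relevant solution is a modified Bessel function, and the ratio of Gamma factors in \eqref{eq:constantH} is precisely the type of quantity produced by evaluating $\int_0^\infty$ of products of Bessel functions against the weight $t^{s}$, or equivalently by a Beta‑integral. Concretely I would: (i) fix the optimizer $u(x)=|x|^{-(\be-2)/2}\phi(\theta)$ with $\phi$ the ground state in the angular variable; (ter) compute the two bulk integrals, verifying the Hardy term has coefficient $(\be-2)^2/4$; and (iii) compute the boundary integral, where the Gamma quotient appears through a standard Beta‑function evaluation of the normal‑direction integral $\int_0^\infty \tau^{s}(\dots)\,d\tau$. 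The homogeneity then forces $H(n,s,\be)$ to be exactly this explicit constant.

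\medskip

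\emph{The main obstacle} I expect is twofold. First, justifying the integration by parts and the boundary terms rigorously on the completed space $\dWds$: one must argue by density from $C_0^\infty(\o{\mC})$ and control the behavior both at the vertex $x=0$ (where the weight $|x|^{-2}$ is singular and $\be<n_s$ is exactly the condition ensuring integrability) and at infinity. Second, and more seriously, \textbf{proving optimality}—that the explicit constant $H(n,s,\be)$ cannot be improved and is not attained. For sharpness I would construct an explicit minimizing sequence built from the separated‑variables extremal profile, truncated/regularized near the vertex and at infinity (e.g.\ multiplying by cutoffs and letting the cutoff scale degenerate), and show the Rayleigh quotient converges to $H(n,s,\be)$ while the would‑be extremizer fails to lie in $\dWds$ (its energy diverges logarithmically, exactly as in \eqref{eq:AVF}). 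Checking that the convexity of $\mC$ makes the multi‑face remainder term nonnegative—so that the single‑face model indeed gives the correct global constant—is the delicate geometric point that makes the polyhedral generalization work.
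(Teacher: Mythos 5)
Your outline has the right overall architecture (a supersolution/ground-state substitution, reduction by homogeneity to a one-variable angular problem, and a truncated minimizing sequence for sharpness), and it matches the paper's strategy in those respects. But the concrete substitution you propose cannot work, and the mechanism you offer for the Gamma quotient is not the one that determines $H(n,s,\be)$. If $\vphi=|x|^{-\gamma}$ is a pure radial power, then on a face where $d_\mC=\la x,u_i\ra$ one has $\la\na\vphi,\na d_\mC\ra=-\gamma|x|^{-\gamma-2}d_\mC$, so the boundary density produced by the integration by parts is $d_\mC^{s}\la\na\vphi,\na d_\mC\ra/\vphi\sim d_\mC^{1+s}\to 0$ as $d_\mC\to 0$: the trace term vanishes identically and your identity collapses to an interior Hardy inequality with no boundary remainder. (Moreover $\gamma=(\be-2)/2$ does not yield the coefficient $(\be-2)^2/4$; the correct relation is $\gamma(n_s-2-\gamma)=(\be-2)^2/4$, and the sharpness argument needs the critical exponent $(n_s-2)/2$.) The paper's supersolution is $\vphi(x)=|x|^{-(n_s-2)/2}\om(d_\mC(x)^2/|x|^2)$ (Proposition \ref{constructofsolution}), where $\om$ solves a hypergeometric ODE in $z=d_\mC^2/|x|^2$ and is the combination of the two Frobenius solutions at $z=0$, of the form $F(\cdot)+Cz^{(1-s)/2}F(\cdot)$. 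It is precisely this $z^{(1-s)/2}$ correction that makes $\lim_{d_\mC\to 0}d_\mC^{s}\la\na\vphi,\na d_\mC\ra/(|x|^{s-1}\vphi)=-H(n,s,\be)$ finite and nonzero (see \eqref{eq:limphi}), and the value $C=-H(n,s,\be)/(1-s)$ is a connection coefficient forced by requiring $\om$ to remain bounded at the interior endpoint $z=1$ (Lemma \ref{fundamental2}, resting on Gauss's evaluation \eqref{eq:lonhon}) --- not a Beta integral of an extremal profile.

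Beyond identifying the constant, the proof needs $\om>0$ on $[0,1]$ and $\om'<0$; in the paper this is a genuinely nontrivial maximum-principle argument (Proposition \ref{phiduong}), and without positivity the factorization $u=v\vphi$ is unavailable. Your plan contains no substitute for any of this: no candidate angular profile, no proof of its positivity, and no computation producing \eqref{eq:constantH}. The sharpness step you describe (truncation of the critical separated profile near $0$ and at infinity, with logarithmically divergent energy) does coincide with the paper's, and you are right to single out the behaviour across the ridges where two faces of $\mC$ meet as the delicate point of the polyhedral generalization --- but you assert the sign there is ``favorable'' by convexity without an argument, whereas the relevant quantity couples the sign of $\om'$ with the distributional Laplacian of the concave function $d_\mC$ and cannot be dismissed without the monotonicity established in Proposition \ref{phiduong}. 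In short: the skeleton is right, but the load-bearing object of the proof --- the explicit hypergeometric supersolution and its two-term expansion at the boundary --- is missing.
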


Note that when $\mC$ is the upper half space $\hs$, Theorem \ref{maintheorem} recovers the recent results of Tzirakis \cite[Theorem 1]{T15} and Alvino et al. \eqref{eq:AVF}. The endpoint case $\be =2$, Theorem \ref{maintheorem} gives us a weighted trace Hardy inequality on $\mC$ as follows 
\begin{equation}\label{eq:traceHardy}
\int_{\mC} |\na u|^2 d_\mC^{s} dx \geq 2 \frac{\Gamma\lt(\frac{1+s}2\rt)}{\Gam\lt(\frac{1-s}2\rt)}\lt(\frac{\Gamma\lt(\frac{n+1-s}{4}\rt)}{\Gamma\lt(\frac{n-1+s}{4}\rt)}\rt)^2\,\int_{\pa\mC} \frac{u(x)^2}{|x|^{1-s}} d\mH^n(x), \quad u\in \dWds.
\end{equation}
Since $\lim_{\be\to n_s}H(n,s,\be) = 0$, then letting $\beta\to n_s$ yields a weighted Hardy inequality on $\mC$
\begin{equation}\label{eq:Hardyonhalfspace}
\frac{(n_s-2)^2}4 \int_{\mC} \frac{u(x)^2}{|x|^2} d_\mC(x)^s dx \leq \int_{\mC} |\na u(x)|^2 d_\mC(x)^s dx,\quad u\in \dWds.
\end{equation}
Again, the constant ${(n_s-2)^2}/4$ in \eqref{eq:Hardyonhalfspace} is sharp.  

%\begin{theorem}\label{maintheorem}
%Let $n\geq 2$, $s \in (-1,1)$, and $2\leq \be \leq n_s:=n+s+1$. There exists a positive constant $H(n,s,\be)$ such that for any $u\in \dWs$ it holds 
%\begin{align}\label{eq:maininequality}
%\int_{\hs} |\na u(x,t)|^2 t^s dx dt& \geq \frac{(\be-2)^2}4 \int_{\hs} \frac{u(x,t)^2}{|x|^2 +t^2} t^{s} dx dt\notag\\
%&\quad\quad\quad\quad\quad + H(n,s,\be)\int_{\sp} \frac{u(x,0)^2}{|x|^{1-s}} dx.
%\end{align}
%The constant $H(n,s,\be)$ is optimal. 
%\end{theorem}

For $\al\in (0,1)$, the fractional Laplacian operator $(-\De)^\al$ is defined by 
$$(-\De)^\al f(x) = \frac{\al 2^{2\al} \Gamma\lt(\frac{n+2\al}{2}\rt)}{\Gamma(1-\al) \pi^{\frac{n}{2}}}\, \mathrm{PV}\int_{\sp}\frac{f(x) -f(y)}{|x-y|^{n+2\al}}dy,$$
where $\mathrm{PV}$ stands for the Cauchy principle value integral. In a very remarkable paper \cite{CS07}, Caffarelli and Silvestre gave an equivalent definition for operator $(-\De)^\al$, $\al\in (0,1)$ via the Dirichlet to Neumann map by considering an extension problem in one more dimension in terms of a degenerate elliptic equation (see \cite{Yang14} for a recent interesting result concerning to the higher order extension for fractional Laplacian). The extension of Caffarelli and Silvestre is as follows. For any function $f$ on $\R^n$, let us consider the extension problem on $\hs$ given by
\begin{equation}\label{eq:extensionproblem}
\d(t^{1-2\al}\na_{(x,t)}u(x,t)) =0, \quad (x,t) \in \hs; \qquad u(x,0) = f(x).
\end{equation}
We recall that the solution of \eqref{eq:extensionproblem} minimizes the energy functional defined by
$$J[u] = \int_{\hs} |\na u(x,t)|^2 t^{1-2\al} dt dx,$$
over all functions $u$ satisfying the condition $u(x,0)= f(x)$. Moreover, if $u$ is a such solution then
$$(-\De)^{\al} f(x) = -2^{2\al-1}\frac{\Gamma(\al)}{\Gamma(1-\al)}\lim\limits_{t\to 0^+} t^{1-2\al} \pa_t u(x,t).$$
Hence, for any $u\in \dot{W}(d_{\hs}^s,\hs)$ we have
\begin{equation}\label{eq:traceonboundary}
\int_{\hs} |\na u(x,t)|^2 t^{s} dx dt \geq 2^{s} \frac{\Gamma\lt(\frac{1+s}{2}\rt)}{\Gam\lt(\frac{1-s}{2}\rt)}\, \|u(\cdot,0)\|_{\dHs}^2,
\end{equation}
where $\dHs$ is the homogeneous Sobolev space of order $(1-s)/2$ that is defined as the completion of $C_0^\infty(\R^n)$ under the norm
$$\|u\|_{\dHs} = \lt(\frac{1}{(2\pi)^n} \int_{\R^n} |\xi|^{1-s} |\hat{u}(\xi)|^2 d\xi\rt)^{1/2},\qquad u\in C_c^\infty(\R^n),$$
where
$$\hat{u}(\xi) = \int_{\R^n} e^{-i\la\xi, x\ra} u(x) dx,$$
is the Fourier transform of $u$. Equality occurs in \eqref{eq:traceonboundary} if $u$ solves the equation \eqref{eq:extensionproblem}. 

Theorem \ref{maintheorem} and \eqref{eq:traceonboundary} immediately imply the following Hardy type inequality for the fractional Laplacian on the upper half space,
\begin{corollary}
Given $n\geq 2$, $s\in (-1,1)$, $\be \in [2,n_s]$, $f \in \dHs$, and let $u$ be the solution of the equation \eqref{eq:extensionproblem} with $\al =(1-s)/2$. Then, it holds 
\begin{align*}
\|f\|_{\dHs}^2 &\geq 2^{-s}\frac{\Gam\lt(\frac{1-s}{2}\rt)}{\Gam\lt(\frac{1+s}{2}\rt)} \frac{(\be-2)^2}{4} \int_{\R^{n+1}_+} \frac{u(x,t)^2}{|x|^2 +t^2} t^{s} dt dx\\
&\quad \quad\quad + 2^{1-s}\frac{\Gam\lt(\frac{n_s+\be-2-2s}4\rt)\Gam\lt(\frac{n_s-\be+2-2s}4\rt)}{\Gam\lt(\frac{n_s+\be-4}4\rt)\Gam\lt(\frac{n_s-\be}4\rt)} \int_{\R^n} \frac{f(x)^2}{|x|^{1-s}} dx.
\end{align*}
\end{corollary}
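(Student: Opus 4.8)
The plan is to read off the corollary as an algebraic combination of Theorem~\ref{maintheorem}, specialized to the cone $\mC=\hs$, with the trace estimate \eqref{eq:traceonboundary}, which becomes an equality on the Caffarelli--Silvestre extension. First I would note that $\hs$ is a polyhedral convex cone of the form \eqref{eq:cone} with $m=1$ and $u_1=(0,\dots,0,1)$; then $\la (x,t),u_1\ra=t$, so $d_\mC(x,t)=t$, $\pa\mC=\sp\times\{0\}$, and $d\mH^n=dx$. Writing a point of $\hs$ as $(x,t)$ with $x\in\sp$, so that the homogeneity weight $|x|^2$ in \eqref{eq:maininequality} becomes $|x|^2+t^2$, the inequality \eqref{eq:maininequality} reads
\begin{equation*}
\int_{\hs}|\na u|^2\,t^s\,dx\,dt \ge \frac{(\be-2)^2}{4}\int_{\hs}\frac{u(x,t)^2}{|x|^2+t^2}\,t^s\,dx\,dt + H(n,s,\be)\int_{\sp}\frac{u(x,0)^2}{|x|^{1-s}}\,dx
\end{equation*}
for every $u\in\dWds$ with $\mC=\hs$.

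Next I would invoke the hypothesis that $u$ solves \eqref{eq:extensionproblem} with $\al=(1-s)/2$, so that $1-2\al=s$ and $u(\cdot,0)=f$. Since the extension solution achieves equality in \eqref{eq:traceonboundary}, one has
\begin{equation*}
\int_{\hs}|\na u|^2\,t^s\,dx\,dt = 2^{s}\,\frac{\Gam\lt(\frac{1+s}{2}\rt)}{\Gam\lt(\frac{1-s}{2}\rt)}\,\|f\|_{\dHs}^2 .
\end{equation*}
In particular this energy is finite, which is exactly what certifies that the extension solution lies in $\dWds$ and hence that the specialized form of \eqref{eq:maininequality} from the previous step applies to it; this membership is the one point deserving a line of justification.

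Solving the last equality for $\|f\|_{\dHs}^2$ gives
\begin{equation*}
\|f\|_{\dHs}^2 = 2^{-s}\,\frac{\Gam\lt(\frac{1-s}{2}\rt)}{\Gam\lt(\frac{1+s}{2}\rt)}\int_{\hs}|\na u|^2\,t^s\,dx\,dt ,
\end{equation*}
and I would then bound the right-hand integral below by the specialized inequality of the first step and distribute the constant. The resulting volume term is precisely $2^{-s}\Gam(\frac{1-s}{2})/\Gam(\frac{1+s}{2})\cdot(\be-2)^2/4$ times $\int_{\hs}u^2\,t^s/(|x|^2+t^2)$, matching the first term of the corollary. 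For the boundary term I would substitute the value \eqref{eq:constantH} of $H(n,s,\be)$: the prefactor $2^{-s}\Gam(\frac{1-s}{2})/\Gam(\frac{1+s}{2})$ cancels the factor $2\,\Gam(\frac{1+s}{2})/\Gam(\frac{1-s}{2})$ appearing in $H$, the powers combine as $2^{-s}\cdot 2=2^{1-s}$, and what survives is $2^{1-s}\,\Gam(\frac{n_s+\be-2-2s}{4})\Gam(\frac{n_s-\be+2-2s}{4})\big/\bigl(\Gam(\frac{n_s+\be-4}{4})\Gam(\frac{n_s-\be}{4})\bigr)$, exactly the claimed coefficient.

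Finally I would dispose of the endpoint $\be=n_s$ separately. There $H(n,s,n_s)=0$, since the factor $\Gam(\frac{n_s-\be}{4})=\Gam(0)$ in the denominator of \eqref{eq:constantH} diverges, so the boundary coefficient vanishes; in place of Theorem~\ref{maintheorem} I would use the limiting Hardy inequality \eqref{eq:Hardyonhalfspace} for $\mC=\hs$, which after the same multiplication by $2^{-s}\Gam(\frac{1-s}{2})/\Gam(\frac{1+s}{2})$ reproduces the stated conclusion with no boundary term. There is no real obstacle here, the corollary being a direct consequence of the two quoted inequalities; the only care needed is the cancellation of the Gamma factors and the verification that the extension solution belongs to $\dWds$.
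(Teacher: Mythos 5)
Your proposal is correct and is exactly the argument the paper intends: the corollary is stated as an immediate consequence of Theorem \ref{maintheorem} specialized to $\mC=\hs$ (where $d_\mC=t$ and the $\R^{n+1}$-norm becomes $|x|^2+t^2$) combined with the equality case of \eqref{eq:traceonboundary} for the Caffarelli--Silvestre extension, and your Gamma-factor bookkeeping and treatment of the endpoint $\be=n_s$ via \eqref{eq:Hardyonhalfspace} are both right. Your remark that membership of the extension solution in $\dWds$ deserves justification is a fair (minor) point that the paper itself passes over.
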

For $\be =2$, we get the inequality
\begin{equation}\label{eq:fracHardyineq}
\|f\|^2_{\dHs} \geq \lt(2^{\frac{1-s}{2}}\,\frac{\Gamma\lt(
\frac{n+1-s}{4}\rt)}{\Gamma\lt(\frac{n-1+s}{4}\rt)}\rt)^2\,\int_{\R^n} \frac{f(x)^2}{|x|^{1-s}} dx,\quad f\in \dHs.
\end{equation}
This is a subclass of the so-called Pitt's inequality (or fractional Hardy inequality) which was first proved by Herbst \cite{H77}, based on dilation analytic techniques, and thereafter by Beckner \cite{B95}, based on the Stein-Weiss potential and Young's inequality, (see also \cite{Eil01,Ya99} for the other proofs). We remark that the Pitt's inequality holds in $\dHa$ for all $\al\in (0,n)$. Since $(1-s)/{2}\in (0,1)$ for $s\in (-1,1)$, it is then worthwhile to note that the proof of Theorem \ref{maintheorem} gives an alternative proof of the Pitt's inequality for $\al\in (0,1)$ (another proof for these $\al$ can be found in \cite[Proposition $4.1$]{FLS08} by using a ground state representation).

For any function $f\in \dHa$ with $\al\in (0,n/2)$, the sharp fractional Sobolev inequality (see \cite{CT04,Lieb83}) says that 
\begin{equation}\label{eq:fSinequality}
\lt(\int_{\R^n} |f(x)|^{\frac{2n}{n-2\al}} dx\rt)^{\frac{n-2\al}n} \leq \frac{\Gam\left(\frac{n-2\al}2\right)}{2^{2\al}\pi^\al\,\Gam\left(\frac{n+2\al}2\right)}\lt(\frac{\Gam(n)}{\Gam\left(\frac n2 \right)}\rt)^{\frac{2\al}n}\|f\|^2_{\dot{H}^\al(\R^n)}.
\end{equation}
Combining  \eqref{eq:fSinequality} and \eqref{eq:traceonboundary} derives a weighted trace Sobolev inequality which reads as follows
\begin{equation}\label{eq:traceSobolev}
\lt(\int_{\sp} |u(x,0)|^{2(s)^*}dx\rt)^{\frac{2}{2(s)^*}}\leq C_{n,s} \int_{\hs}|\na u|^2 t^{s} dx dt,
\end{equation}
where $2(s)^* =2n/(n-1+s)$, $s\in (-1,1)$, and the sharp constant $C_{n,s}$ is given by 
$$C_{n,s} =\frac1{2\pi^{\frac{1-s}2}} \frac{\Gam\lt(\frac{1-s}2\rt)\Gam\lt(\frac{n-1+s}2\rt)}{\Gam\lt(\frac{1+s}2\rt)\Gam\lt(\frac{n+1-s}2\rt)} \lt(\frac{\Gam(n)}{\Gam\lt(\frac n2\rt)}\rt)^{\frac{1-s}n}.$$
Equality occurs in \eqref{eq:traceSobolev}  if and only if $u$ is solution of \eqref{eq:extensionproblem} with the initial condition of the form $c(|x-x_0|^2 + t_0^2)^{-(n-1+s)/2}$ for some $c\in \R$, $x_0\in \R^n$, and $t_0 >0$.

The fractional Laplacian defined on subsets $\Om$ of $\R^n$ recently appears in \cite{CC10,CT10,T11}. Its extension problem  is to consider test functions in $\COR$. When $\Om$ is the half space $\Rn = \{x_n > 0\}$, we have the following results

\begin{theorem}\label{maintheorem2}
Let $s\in(-1,1)$, $n\geq 2$, and $\be\in[0,1]$. There exists a constant $k(s,\be)$ such that for any function $u\in C_0^\infty(\R^n_+\times\R)$, it holds

\begin{equation}\label{eq:mainresult2}
\i0i\int_{\Rn} |\na u|^2 t^s dx dt \geq \frac{\be^2}4 \i0i\int_{\Rn} \frac{u(x,t)^2}{x_n^2 } t^s dxdt +k(s,\be)\int_{\R^n_+} \frac{u(x,0)^2}{x_n^{1-s}} dx,
\end{equation}
where $k(s,\be)$ is given by
\begin{equation}\label{eq:ksbeta}
k(s,\be) = 2\frac{\Gam\lt(\frac{1+s}{2}\rt)}{\Gam\lt(\frac{1-s}{2}\rt)}\lt(\frac{\Gam\lt(\frac{3-s+\sqrt{1-\be^2}}{4}\rt)}{\Gam\lt(\frac{1+s +\sqrt{1-\be^2}}{4}\rt)}\rt)^2.
\end{equation}
The constant $k(s,\be)$ is optimal. Moreover, there is a positive constant $c>0$ such that the following inequality holds for all $u\in C_0^\infty(\ORn)$ 
\begin{align}\label{eq:improvementversion2}
\i0i\int_{\Rn} |\na u|^2 t^s dx dt &\geq \frac{\be^2}4 \i0i\int_{\Rn} \frac{u(x,t)^2}{x_n^2} t^s dxdt +k(s,\be)\int_{\R^n_+} \frac{u(x,0)^2}{x_n^{1-s}} dx\notag\\
&\quad\quad + c\lt(\int_{\R^n_+} |u(x,0)|^{\frac{2n}{n-1+s}} dx\rt)^{\frac{n-1+s}n}.
\end{align}
\end{theorem}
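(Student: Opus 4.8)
The plan is to follow the strategy behind Theorem \ref{maintheorem}: first remove the Hardy term by a ground state substitution, and then reduce the resulting weighted trace inequality to a one–dimensional spectral problem whose sharp constant is read off from hypergeometric functions. Set $\mu=\sqrt{1-\be^2}\in[0,1]$ and $\gamma=(1-\mu)/2$, so that $\gamma(1-\gamma)=\be^2/4$ and $2\gamma=1-\mu$. Writing $u(x,t)=x_n^{\gamma}\,w(x,t)$ and expanding $\pa_{x_n}u=\gamma x_n^{\gamma-1}w+x_n^{\gamma}\pa_{x_n}w$, an integration by parts in $x_n$ (the boundary terms at $x_n=0$ and $x_n=\infty$ vanishing for $u\in C_0^\infty(\ORn)$) yields the identity
\[
\i0i\int_\Rn|\na u|^2 t^s\,dx\,dt-\frac{\be^2}{4}\i0i\int_\Rn\frac{u^2}{x_n^2}\,t^s\,dx\,dt=\i0i\int_\Rn x_n^{2\gamma}|\na w|^2\,t^s\,dx\,dt .
\]
Since $u(x,0)^2/x_n^{1-s}=x_n^{2\gamma-1+s}w(x,0)^2$, the inequality \eqref{eq:mainresult2} is equivalent to the weighted trace inequality $\i0i\int_\Rn x_n^{2\gamma}|\na w|^2 t^s\,dx\,dt\ge k(s,\be)\int_\Rn x_n^{2\gamma-1+s}w(x,0)^2\,dx$, which I would establish first.

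To compute the sharp constant I would strip off the tangential directions. A Fourier transform in $x'=(x_1,\dots,x_{n-1})$ turns $|\na_{x'}w|^2$ into $|\xi'|^2|\hat w|^2\ge 0$, so by Plancherel the binding case is $\xi'=0$ and it suffices to treat functions $v=v(x_n,t)$ on the quarter plane $\{x_n>0,\ t>0\}$ carrying the weight $x_n^{2\gamma}t^s$. In polar coordinates $x_n=r\cos\theta$, $t=r\sin\theta$ with $\theta\in(0,\pi/2)$, the substitution $r=e^{\tau}$ followed by the symmetrising change $v=r^{-(2\gamma+s)/2}W$ uses the scale invariance of the quotient to remove the radial weight and leads to
\[
\int_{\R}\int_0^{\pi/2}\left(W_\tau^2+\tfrac{(2\gamma+s)^2}{4}W^2+W_\theta^2\right)\cos^{2\gamma}\theta\,\sin^{s}\theta\,d\theta\,d\tau\ \ge\ k\int_{\R}W(\tau,0)^2\,d\tau .
\]
A Fourier transform in $\tau$ decouples the frequencies, and since a nonzero frequency only adds the nonnegative term $\omega^2 W^2$, the extremal mode is $\omega=0$; the sharp constant is therefore $k=\inf_{\psi}\big(\int_0^{\pi/2}(\tfrac{(2\gamma+s)^2}{4}\psi^2+(\psi')^2)\cos^{2\gamma}\theta\sin^{s}\theta\,d\theta\big)/\psi(0)^2$. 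The Euler–Lagrange equation $\big(\cos^{2\gamma}\theta\sin^{s}\theta\,\psi'\big)'=\tfrac{(2\gamma+s)^2}{4}\cos^{2\gamma}\theta\sin^{s}\theta\,\psi$, with regularity at $\theta=\pi/2$ and the natural boundary condition $\lim_{\theta\to0}\cos^{2\gamma}\theta\sin^{s}\theta\,\psi'(\theta)=-k\,\psi(0)$, becomes a hypergeometric equation under $\zeta=\sin^2\theta$. Its solution regular at $\theta=\pi/2$ is a Gauss hypergeometric function, and computing the ratio between $\psi(0)$ and the coefficient of $\theta^{1-s}$ by the summation formula ${}_2F_1(a,b;c;1)=\Gam(c)\Gam(c-a-b)/\big(\Gam(c-a)\Gam(c-b)\big)$ produces exactly the constant $k(s,\be)$ in \eqref{eq:ksbeta} after inserting $2\gamma=1-\sqrt{1-\be^2}$. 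Optimality is obtained by feeding truncations of the homogeneous profile $r^{-(2\gamma+s)/2}\psi(\theta)$, slowly spread out in $x'$, into the quotient.

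For the improved inequality \eqref{eq:improvementversion2} I would use a ground state representation based on the minimiser found above. Let $\Phi(x_n,t)=r^{-(2\gamma+s)/2}\psi(\theta)$; it solves $\d\big(x_n^{2\gamma}t^{s}\na\Phi\big)=0$ in $\{x_n>0,\ t>0\}$ and realises the trace condition with constant $k(s,\be)$. Writing $w=\Phi z$ and integrating by parts using the equation for $\Phi$ gives the exact identity
\[
\i0i\int_\Rn x_n^{2\gamma}|\na w|^2 t^{s}\,dx\,dt-k(s,\be)\int_\Rn x_n^{2\gamma-1+s}w(x,0)^2\,dx=\i0i\int_\Rn x_n^{2\gamma}\,\Phi^2\,|\na z|^2\,t^{s}\,dx\,dt\ \ge\ 0 ,
\]
so the deficit in \eqref{eq:mainresult2} is a nonnegative weighted Dirichlet energy of $z$. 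Since $\Phi(x,0)$ is a constant multiple of $x_n^{-\gamma-s/2}$, one has $u(x,0)=c\,x_n^{-s/2}z(x,0)$, so that the Sobolev term in \eqref{eq:improvementversion2} equals a constant times $\big(\int_\Rn x_n^{-\frac{sn}{n-1+s}}|z(x,0)|^{\frac{2n}{n-1+s}}dx\big)^{\frac{n-1+s}{n}}$. Hence \eqref{eq:improvementversion2} reduces to a weighted trace Sobolev inequality for the Caffarelli–Kohn–Nirenberg energy $\int x_n^{2\gamma}\Phi^2|\na z|^2 t^{s}$, which I would derive from the sharp trace Sobolev inequality \eqref{eq:traceSobolev} together with the homogeneity of the weight $\Phi^2$.

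The main obstacle is precisely this last step. Because $k(s,\be)$ is sharp and is attained only in the scale–invariant, $x'$–independent limit, there is no $L^2$ spectral gap, and the Sobolev gain must be extracted entirely from the excess energy of the non–extremal modes; proving the weighted trace Sobolev inequality for $\int x_n^{2\gamma}\Phi^2|\na z|^2 t^{s}$ with the exact exponent $2n/(n-1+s)$ and the weight $x_n^{-sn/(n-1+s)}$ requires the degeneracy of $\Phi^2$ to be balanced correctly in the Caffarelli–Kohn–Nirenberg sense. By contrast, the evaluation of $k(s,\be)$ through the hypergeometric boundary value problem, while computational, is routine.
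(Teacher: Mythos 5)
Your overall architecture is close in spirit to the paper's: the paper also builds the homogeneous ground state (in the form $\phi=x_n^{-s/2}\om(t/x_n)$, which is exactly your $x_n^{\ga}r^{-(2\ga+s)/2}\psi(\theta)$ in the variable $y=\tan\theta$), factors $u=v\phi$, and reads off the deficit identity \eqref{eq:L2gradient1}. But there is a genuine gap in how you pin down the sharp constant. You reduce to the angular minimization $\inf_\psi\int_0^{\pi/2}(\psi'^2+\tfrac{(2\ga+s)^2}{4}\psi^2)\cos^{2\ga}\theta\sin^s\theta\,d\theta/\psi(0)^2$ and select ``the solution regular at $\theta=\pi/2$''. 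That is the wrong selection rule. For $\be<1$ the weight $\cos^{2\ga}\theta$ with $2\ga=1-\sqrt{1-\be^2}<1$ does not kill the trace at $\theta=\pi/2$: the unconstrained (natural/Neumann) minimizer has $\psi(\pi/2)\neq 0$ and yields a strictly smaller constant than $k(s,\be)$. The constant \eqref{eq:ksbeta} corresponds to the Dirichlet-type condition $\psi(\pi/2)=0$ — in the paper this is precisely the requirement $\lim_{y\to\infty}\om(y)=0$, which is what determines $C_2$ in Lemma \ref{solution2}. This condition is not cosmetic; it is forced because admissible $u$ vanish near $\{x_n=0\}$, equivalently because the original energy $\int|\na u|^2t^s$ (as opposed to the reduced energy $\int x_n^{2\ga}|\na w|^2t^s$) contains the Hardy integral $\int x_n^{2\ga-2}w^2t^s$, which is infinite on the Neumann branch. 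For the same reason your intermediate claim that \eqref{eq:mainresult2} is \emph{equivalent} to the weighted trace inequality for $w$ is false as stated: that inequality with constant $k(s,\be)$ fails for general finite-reduced-energy $w$ (test with $w$ smooth up to $x_n=0$ built from the Neumann profile), so you cannot ``establish it first'' without carrying along the constraint on $w$. You would also need to treat $\be=1$ separately, where the two indicial exponents at $\theta=\pi/2$ coalesce and a logarithmic branch appears (the paper handles this via \eqref{eq:logarith}).

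The second gap is the improvement term \eqref{eq:improvementversion2}, which you reduce correctly to the weighted trace Sobolev inequality \eqref{eq:enoughsobolev} for the degenerate energy $\int\phi^2|\na z|^2t^s$ but then do not prove, and you acknowledge as much. The proposed route ``from \eqref{eq:traceSobolev} plus homogeneity'' does not work: the weight $\phi^2t^s\sim x_n^{1+\sqrt{1-\be^2}}t^s(x_n^2+t^2)^{-(1+s+\sqrt{1-\be^2})/2}$ degenerates both on $\{x_n=0\}$ and at infinity, and the unweighted trace Sobolev inequality gives no control of such an energy. The paper's actual argument is the substantial part of the proof: one applies Maz'ya's $L^1$ Sobolev and trace inequalities \eqref{eq:abba}--\eqref{eq:baab} to $\phi^{\frac{2n+s}{n+s-1}}v$, absorbs the resulting $|\na\phi|$ term using the weighted Hardy-type Lemmas $5$ and $11$ of \cite{FMT13} (this is where the exponent balance $A+B+2-2\Gamma>0$ enters), and then converts the $L^1$ statements to the $L^2$ ones via the substitution $v=w^{\frac{2n+s}{n+s-1}}$ and Cauchy--Schwarz. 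Without this (or an equivalent) mechanism, the third assertion of the theorem is unproved.
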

When $\Om$ is an arbitrary domain of $\R^n$, we have the following results under a special geometric assumption of the domain, 
\begin{theorem}\label{maintheorem3}
Let $s\in(-1,0]$, $n\geq 2$, $\be\in[0,1]$, and let $\Om$ be a proper domain of $\R^n$. Assume, in addition, that
\begin{equation}\label{eq:meanconvex}
-\De d(x) \geq 0, \quad x\in \Om,
\end{equation}
where $d(x) = \inf\{|x-y|\,:\, y\in \pa \Om\}$ is the distance from $x$ to $\pa \Om$, then there exists a constant $k(s,\be)$ such that for any function $u\in C_0^\infty(\Om\times\R)$, it holds

\begin{equation}\label{eq:mainresult3}
\int_0^\infty \int_{\Om} |\na u|^2 t^s dx dt \geq \frac{\be^2}4 \int_0^\infty\int_{\Om} \frac{u(x,t)^2}{d(x)^2 } t^s dxdt +k(s,\be)\int_\Om \frac{u(x,0)^2}{d(x)^{1-s}} dx,
\end{equation}
where $k(s,\be)$ is given by \eqref{eq:ksbeta}.

If there exists $x_0\in \pa \Om$ and $r > 0$ such that the part of the boundary $\pa \Om \cap B(x_0,r)$ is $C^1-$regular, then the constant $k(s,\be)$ is optimal.

Moreover, if $\Om$ is uniformly Lipschitz domain and has finite inner radius (that is, $\Rin(\Om):=\sup_{x\in \Om} d(x) < \infty$) and $s\in (-1,0)$ then there is a positive constant $c>0$ such that the following inequality holds for all $u\in C_0^\infty(\Om\times \R)$
 
\begin{align}\label{eq:improvementversion3}
\int_0^\infty\int_{\Om} |\na u|^2 t^s dx dt &\geq \frac{\be^2}4 \int_0^\infty\int_{\Om} \frac{u(x,t)^2}{d(x)^2 } t^s dxdt +k(s,\be)\int_\Om \frac{u(x,0)^2}{d(x)^{1-s}} dx\notag\\
&\quad\quad + c\lt(\int_\Om |u(x,0)|^{\frac{2n}{(n-1+s)}} dx\rt)^{\frac{n-1+s}n}.
\end{align}
\end{theorem}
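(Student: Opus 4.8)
The plan is to transplant the half-space argument of Theorem~\ref{maintheorem2} to the general domain by a ground-state substitution built from the distance function $d$, using the mean-convexity hypothesis \eqref{eq:meanconvex} to absorb the single error term that the non-flat boundary produces. Let $\Phi(\rho,t)$ denote the positive ground state associated with the half-space problem of Theorem~\ref{maintheorem2}: the function of the distance-to-boundary variable $\rho>0$ and of $t>0$ that saturates \eqref{eq:mainresult2}. It solves $-\d(t^s\na\Phi)=\frac{\be^2}4\,t^s\rho^{-2}\Phi$ on the quadrant $\{\rho>0,\,t>0\}$ together with the Robin-type condition $-\lim_{t\to0^+}t^s\pa_t\Phi = k(s,\be)\,\rho^{-(1-s)}\Phi$ on $\{t=0\}$, is homogeneous in $(\rho,t)$, and vanishes like a positive power of $\rho$ as $\rho\to0^+$. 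I would first record that $\Phi$ is nondecreasing in $\rho$, i.e.\ $\Phi_\rho\ge0$, since this monotonicity is exactly what lets the mean-convexity be exploited.

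I then set $\phi(x,t)=\Phi(d(x),t)$ and substitute $u=\phi w$ with $w\in C_0^\infty(\Om\times\R)$; note that $w$ vanishes near the lateral boundary $\pa\Om\times(0,\infty)$ because $u$ has compact support in $\Om\times\R$. Expanding $|\na u|^2$ and integrating by parts over $\Om\times(0,\infty)$ yields the ground-state representation
\[
\i0i\!\int_\Om t^s|\na u|^2\,dxdt = \i0i\!\int_\Om t^s\phi^2|\na w|^2\,dxdt + \i0i\!\int_\Om\lt(-\frac{\d(t^s\na\phi)}{\phi}\rt)u^2\,dxdt + B_0,
\]
where $B_0$ is the flux across $\{t=0\}$. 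Using the eikonal identity $|\na d|=1$ and $\na_x\phi=\Phi_\rho\na d$, the bulk potential splits as
\[
-\d(t^s\na\phi) = \frac{\be^2}4\frac{t^s}{d^2}\,\phi + t^s\,\Phi_\rho\,(-\De d),
\]
the first term coming from the half-space equation for $\Phi$ and the second being the genuinely new contribution. The flux $B_0$ reproduces precisely $k(s,\be)\int_\Om d^{-(1-s)}u(x,0)^2\,dx$, exactly as in Theorem~\ref{maintheorem2}. Discarding the nonnegative Dirichlet energy and observing that $\Phi_\rho\ge0$ together with $-\De d\ge0$ force $\int t^s\Phi_\rho(-\De d)\,u^2\ge0$, one obtains \eqref{eq:mainresult3}.

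For the optimality of $k(s,\be)$ under the $C^1$ hypothesis I would argue by localization: near the regular point $x_0$ the domain is a half-space to leading order, so inserting suitably rescaled near-extremals of the half-space problem of Theorem~\ref{maintheorem2} concentrated at $x_0$ makes the ratio of the two sides of \eqref{eq:mainresult3} tend to $k(s,\be)$, ruling out any larger constant. For the improved inequality \eqref{eq:improvementversion3}, the point is that after the substitution the quantity $\i0i\int_\Om t^s|\na u|^2 - \frac{\be^2}4(\text{Hardy}) - k(s,\be)(\text{trace Hardy})$ equals the nonnegative remainder $\i0i\int_\Om t^s\phi^2|\na w|^2\,dxdt$ plus the nonnegative mean-convexity term; I would then show this leftover Dirichlet energy controls a trace-Sobolev norm of $w(\cdot,0)=u(\cdot,0)/\Phi(d(\cdot),0)$. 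Combining this with \eqref{eq:traceSobolev}, the hypotheses that $\Om$ be uniformly Lipschitz of finite inner radius supply a uniform local Sobolev embedding and, together with $s<0$, the integrability needed to sum over a dyadic decomposition of $\Om$ in the variable $d$, producing the critical term.

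The main obstacle I anticipate is making the integration by parts rigorous for the merely Lipschitz function $d$: the quantity $-\De d$ must be read as a nonnegative Radon measure whose singular part is carried by the cut locus of $\Om$ (which the support of $u$ need not avoid), and one must justify that $\int t^s\Phi_\rho\,u^2\,d(-\De d)\ge0$, e.g.\ by smoothing $d$ or by excising a shrinking neighbourhood of the cut locus and controlling the resulting flux. The second delicate point, on which the whole comparison hinges, is the global monotonicity $\Phi_\rho\ge0$ of the half-space ground state; verifying it from the explicit form of $\Phi$ is what is expected to pin down the restriction $s\in(-1,0]$.
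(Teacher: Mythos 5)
Your argument is essentially the paper's: the same ground state $\phi(x,t)=d(x)^{-s/2}\om\bigl(t/d(x)\bigr)$ transplanted from the half-space, the same splitting of $-\d(t^s\na\phi)$ into the Hardy potential plus $t^s\Phi_\rho(-\De d)$, and the same sign argument --- your monotonicity $\Phi_\rho\ge 0$ is exactly part (iv) of Lemma \ref{solution2} ($y\om'(y)+\tfrac s2\om(y)\le 0$ for $s\in(-1,0]$), which is precisely where the restriction on $s$ enters. The optimality-by-localization and the reduction of \eqref{eq:improvementversion3} to a weighted Sobolev bound on the ground-state remainder likewise match the paper, which carries out that last step via Maz'ya's $L^1$-Sobolev inequalities and the lemmas of Filippas--Moschini--Tertikas (using the finite inner radius to absorb the zeroth-order term) rather than your dyadic decomposition.
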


The case $\beta =0$ in Theorems \ref{maintheorem2} and \ref{maintheorem3} are exact Theorem $1$ and $2$ in \cite{FMT13}, respectively. Our proofs below of Theorems \ref{maintheorem2} and \ref{maintheorem3} follow closely the ideas in the proof of Theorems $1$ and $2$ in \cite{FMT13}. We note that Theorem \ref{maintheorem3} is only stated for $s\in (-1,0]$. This condition is imposed to treat the term concerning to the quantity $-\De d$ (which is $0$ in the half space case). 

Both of Sobolev trace inequality and Hardy inequality have many applications, especially, to the boundary value problem for partial differential equation and nonlinear analysis. They have been developed by many authors in may different setting by many different methods (see~\emph{e.g.} \cite{Ad02,AVV10,AVV11,BM97,BV97,CR09,Da99,FS081,FMT07,FMT13,FS08,GS08,GGM04,GM08,VZ00}).

We conclude this introduction by introducing the logarithmic Hardy trace inequalities and logarithmic Sobolev trace inequalities which are the consequences of the weighted trace Hardy inequality \eqref{eq:traceHardy}, the weighted trace Sobolev inequality \eqref{eq:traceSobolev} and H\"older inequality. More precisely, we will prove the following theorem,
\begin{theorem}\label{logHS}
There exist positive constants $C_1, C_2\leq C_{n,s}$ such that
\begin{description}
\item (i) If $u\in \dot{W}(d_{\hs}^s,\hs)$, $\int_{\sp}u(x,0)^2dx =1$, there holds
\begin{equation}\label{eq:logSobtrace}
\int_{\sp} u(x,0)^2 \ln(u(x,0)^2) dx \leq \frac{n}{1-s} \ln\lt(C_1\int_{\hs}|\na u(x,t)|^2 t^{s} dx dt\rt).
\end{equation}
\item (ii) If $u\in \dot{W}(d_{\hs}^s,\hs)$, $\int_{\sp} \frac{u(x,0)^2}{|x|^{1-s}} dx =1$, we then have
\begin{equation}\label{eq:logHardytrace}
\int_{\sp} \frac{u(x,0)^2}{|x|^{1-s}} \ln\lt(\frac{u(x,0)^2}{|x|^{1-s-n}}\rt) dx \leq \frac n{1-s} \ln\lt(C_2\int_{\hs}|\na u(x,t)|^2 t^s dx dt\rt).
\end{equation}
\end{description}
\end{theorem}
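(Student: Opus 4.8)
The plan is to deduce both estimates from the weighted trace Sobolev inequality \eqref{eq:traceSobolev} by turning each into an \emph{entropy} bound for a suitable probability measure and closing it with Jensen's inequality (equivalently, the limiting case of H\"older's inequality). Throughout I write $w:=u(\cdot,0)$, set $p:=2(s)^*=\frac{2n}{n-1+s}$, and abbreviate the energy by $\mathcal E(u):=\int_{\hs}|\na u|^2 t^s\,dx\,dt$. Since $s\in(-1,1)$ we have $p>2$, and \eqref{eq:traceSobolev} reads $\|w\|_{L^p(\sp)}^2\le C_{n,s}\,\mathcal E(u)$. The whole point is to extract $\int w^2\ln w^2$ (respectively its weighted analogue) out of $\ln\|w\|_p$, so that the trace Sobolev inequality can then be inserted verbatim.

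For (i) the hypothesis $\int_{\sp}w^2\,dx=1$ makes $d\mu:=w^2\,dx$ a probability measure. I would apply Jensen's inequality to the concave function $\ln$ with the choice $F:=|w|^{p-2}$, which gives
\[
\frac{p-2}2\int_{\sp}w^2\ln(w^2)\,dx=\int_{\sp}\ln F\,d\mu\le\ln\!\int_{\sp}F\,d\mu=\ln\|w\|_p^p .
\]
Since $\frac{2p}{p-2}=\frac{2n}{1-s}$, this rearranges to $\int_{\sp}w^2\ln(w^2)\,dx\le\frac{n}{1-s}\ln\|w\|_p^2$, and feeding in \eqref{eq:traceSobolev} together with the monotonicity of $\ln$ yields \eqref{eq:logSobtrace} with $C_1=C_{n,s}$.

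For (ii) the weighted trace Hardy inequality \eqref{eq:traceHardy} plays the preliminary role of guaranteeing $\int_{\sp}\frac{w^2}{|x|^{1-s}}\,dx\le\text{const}\cdot\mathcal E(u)<\infty$, so that under the normalization $\int_{\sp}\frac{w^2}{|x|^{1-s}}\,dx=1$ the weight $d\nu:=|x|^{s-1}w^2\,dx$ is a genuine probability measure. I would then apply Jensen with $F:=\bigl(w^2|x|^{n-1+s}\bigr)^a$, where the exponent $a:=\frac{1-s}{n-1+s}>0$ is chosen precisely so that the $|x|$-weight cancels in $\int F\,d\nu$: one checks $2a+2=p$ and that the total power of $|x|$ vanishes, whence $\int_{\sp}F\,d\nu=\|w\|_p^p$. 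Because $w^2|x|^{n-1+s}=w^2/|x|^{1-s-n}$ and $\frac pa=\frac{2n}{1-s}$, Jensen produces
\[
\int_{\sp}\frac{w^2}{|x|^{1-s}}\ln\!\Bigl(\frac{w^2}{|x|^{1-s-n}}\Bigr)\,dx\le\frac{n}{1-s}\ln\|w\|_p^2 ,
\]
and \eqref{eq:traceSobolev} again converts the right-hand side into \eqref{eq:logHardytrace} with $C_2=C_{n,s}$.

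The only genuine obstacle is structural rather than computational: one must guess the correct test power $a$ (and the correct reference probability measure) so that the Jensen upper bound collapses exactly onto the $L^p$-quantity controlled by \eqref{eq:traceSobolev}; in (ii) this is what forces the weight $|x|^{n-1+s}$ inside the logarithm and makes the extraneous power of $|x|$ disappear. After that the argument is bookkeeping: verifying $p>2$ and $a>0$ (which use $s\in(-1,1)$ and $n\ge2$), and checking the two arithmetic identities $\frac{2p}{p-2}=\frac pa=\frac{2n}{1-s}$ that yield the common factor $\frac n{1-s}$ appearing in both statements. These derivations give $C_1=C_2=C_{n,s}$, consistent with the claimed bound $C_1,C_2\le C_{n,s}$.
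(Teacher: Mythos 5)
Your proposal is correct and is essentially the paper's argument: both results are deduced from the weighted trace Sobolev inequality \eqref{eq:traceSobolev} by an entropy-extraction step, which the paper carries out as H\"older interpolation followed by taking logarithms, dividing by $p-2$ and letting $p\to 2^+$, and which you carry out equivalently (as you yourself note) via Jensen's inequality at $p=2(s)^*$. The choice of weight in part (ii) matches the paper's exponent $\ga_p$ in the limit, and both routes yield $C_1=C_2=C_{n,s}$.
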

The inequalities \eqref{eq:logSobtrace} and \eqref{eq:logHardytrace} are the trace versions of the logarithimic Sobolev inequality and logarithmic Hardy inequality obtained in \cite{DD02,DDFT10} by Dolbeault et al.. It should mention here that the inequalities \eqref{eq:traceSobolev} and \eqref{eq:logSobtrace} with the different constants were obtained by Xiao in \cite{X06}. In his paper, Xiao proved these inequalities only for the harmonic extension of the functions from $\dHs$.

Denoting $C_{LS}(n,s)$ and $C_{LH}(n,s)$ the best constants for which the inequalities\eqref{eq:logSobtrace} and \eqref{eq:logHardytrace} hold, respectively. From Theorem \ref{logHS}, we see that $C_{LS}(n,s)$ and $C_{LH}(n,s)$ are dominated by $C_{n,s}$. However, we do not know the explict values of $C_{LS}(n,s)$ and $C_{LH}(n,s)$ in general. If we restrict \eqref{eq:logSobtrace} and \eqref{eq:logHardytrace} to the radial functions in $\dot{W}(d_{\hs}^s,\hs)$, we will obtain the following results. Denoting $C_{LS,r}(n,s)$ and $C_{LH,r}(n,s)$ the best constants for which the inequalities \eqref{eq:logSobtrace} and \eqref{eq:logHardytrace} hold for any radial function in $\dot{W}(d_{\hs}^s,\hs)$, respectively, then
\begin{theorem}\label{Radialcase}
We have
\begin{equation}\label{eq:CLSr}
C_{LS,r}(n,s) =\frac{8}{n(1-s)e}\frac{\Gam\lt(\frac{n+1+s}{2}\rt)}{\Gam\lt(\frac n2\rt)\Gam\lt(\frac{1+s}{2}\rt)}\lt(\frac{(1-s)\Gam\lt(\frac n2\rt)}{2 \pi^{\frac n2} \Gam\lt(\frac n{1-s}\rt)}\rt)^{\frac{1-s}{n}},
\end{equation}
and 
\begin{equation}\label{eq:CHSr}
C_{LH,r}(n,s) =\frac{2\Gam\lt(\frac{n+1+s}{2}\rt)}{\pi^{\frac{1-s}{2}} \Gam\lt(\frac n2 +1\rt)\Gam\lt(\frac{1+s}{2}\rt)} \lt(\frac{2n-1+s}{(n-1+s)^2}\rt)^{1-\frac{1-s}{2n}} \lt(\frac{(1-s)\Gam\lt(\frac n2\rt)^2}{8\pi e}\rt)^{\frac{1-s}{2n}}.
\end{equation}
\end{theorem}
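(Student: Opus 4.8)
The plan is to recast each best-constant assertion as a scale-invariant variational problem on the boundary and then to solve it explicitly. First I would note that each of \eqref{eq:logSobtrace} and \eqref{eq:logHardytrace} is invariant under the dilation that preserves its normalization: for (i), $u(x,t)\mapsto\lambda^{n/2}u(\lambda x,\lambda t)$ scales the energy $\int_{\hs}|\na u|^2t^s$ by $\lambda^{1-s}$ and shifts the left-hand side by $n\ln\lambda$, the two changes cancelling in the inequality, and (ii) admits the analogous symmetry. Hence $C_{LS,r}(n,s)$ and $C_{LH,r}(n,s)$ are genuine suprema of scale-invariant quotients over normalized radial functions. Since for a fixed boundary datum $f$ the energy is minimized by the solution of the extension problem, for which \eqref{eq:traceonboundary} is an equality, I may replace the energy by $2^{s}\frac{\Gam((1+s)/2)}{\Gam((1-s)/2)}\|f\|_{\dHs}^{2}$ and work entirely with radial $f$ on $\sp$; thus
$$C_{LS,r}(n,s)=\sup_{f\ \text{radial},\ \int_{\sp}f^2=1}\frac{\exp\!\left(\frac{1-s}{n}\int_{\sp}f^2\ln f^2\,dx\right)}{2^{s}\frac{\Gam((1+s)/2)}{\Gam((1-s)/2)}\,\|f\|_{\dHs}^{2}},$$
and likewise for $C_{LH,r}$ with the weighted normalization.

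For part (i) I would determine the extremal radial profile from the Euler--Lagrange equation of this quotient. Its form is forced by the answer \eqref{eq:CLSr}: the factor $\Gam(n/(1-s))$ can only arise from the normalization integral $Z=\int_{\sp}e^{-|x|^{1-s}}dx=\frac{2\pi^{n/2}\Gam(n/(1-s))}{(1-s)\Gam(n/2)}$, so the extremal density is a fractional Gaussian $f_*(x)^2=Z^{-1}e^{-\sigma|x|^{1-s}}$, the scale $\sigma$ being irrelevant by invariance. Along this family the entropy is elementary: the self-entropy of the exponential produces the universal factor $e^{-1}$, while $Z$ produces the factor $\left(\frac{(1-s)\Gam(n/2)}{2\pi^{n/2}\Gam(n/(1-s))}\right)^{(1-s)/n}$ of \eqref{eq:CLSr}. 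It then remains to evaluate $\|f_*\|_{\dHs}^2$; this is the main computation, and it should yield the factor $\frac{8}{n(1-s)}\frac{\Gam((n+1+s)/2)}{\Gam(n/2)\Gam((1+s)/2)}$ and complete \eqref{eq:CLSr}.

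For part (ii) the weight $|x|^{s-1}$ suggests the Emden--Fowler substitution $f(x)=|x|^{-(n-1+s)/2}\psi(\ln|x|)$. Under it the constraint $\int_{\sp}|x|^{s-1}f^2\,dx=1$ and the entropy term of \eqref{eq:logHardytrace} collapse to the translation-invariant one-dimensional integrals $|S^{n-1}|\int_{\R}\psi^2\,d\tau$ and $|S^{n-1}|\int_{\R}\psi^2\ln\psi^2\,d\tau$, the logarithmic argument being exactly $\psi^2$ (a direct check). Using the ground-state representation attached to the sharp trace Hardy inequality \eqref{eq:traceHardy}, the energy equals its Hardy value plus a remainder which, in the variable $\tau=\ln|x|$, is a one-dimensional Dirichlet form in $\psi$. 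The problem thereby reduces to the sharp one-dimensional Gaussian logarithmic Sobolev inequality, whose Gaussian extremizer supplies the factor involving $e$ inside the exponent $(1-s)/2n$ of \eqref{eq:CHSr}; combined with the Jacobian $|S^{n-1}|=2\pi^{n/2}/\Gam(n/2)$ and the explicit value of the one-dimensional symbol, this yields the remaining Gamma factors and the term $\left(\frac{2n-1+s}{(n-1+s)^2}\right)^{1-(1-s)/2n}$.

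The crux in both parts is sharpness, i.e. proving that the displayed families actually extremize the quotient among all radial functions. For (i) this reduces to the exact evaluation of the weighted seminorm $\|e^{-|x|^{1-s}/2}\|_{\dHs}^2$ of the fractional Gaussian, the nonelementary ingredient of the whole argument, together with the verification that this profile solves the Euler--Lagrange equation. For (ii) it rests on the reduction of the nonlocal radial energy to the one-dimensional Dirichlet form and on checking that the sharp constant of the resulting one-dimensional logarithmic Sobolev inequality is attained in the Gaussian regime. Once these are in place, the remaining Gamma-function bookkeeping is routine, and letting the comparison functions run through the extremal families shows that the constants \eqref{eq:CLSr} and \eqref{eq:CHSr} cannot be improved.
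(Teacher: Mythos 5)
Your argument breaks at its very first step. The constants $C_{LS,r}(n,s)$ and $C_{LH,r}(n,s)$ are defined by restricting \eqref{eq:logSobtrace} and \eqref{eq:logHardytrace} to functions that are radial on the half space, i.e.\ $u(x,t)=v(|(x,t)|)$, and for such $u$ the denominator of the relevant quotient is the actual energy $\int_{\hs}|\na u|^2t^s\,dxdt$. You replace this energy by its infimum over \emph{all} extensions of the trace $f=u(\cdot,0)$, namely $2^{s}\frac{\Gam((1+s)/2)}{\Gam((1-s)/2)}\|f\|_{\dHs}^{2}$. That substitution is admissible only if the energy-minimizing extension of a radial datum again lies in the competition class, i.e.\ is a function of $|(x,t)|$ alone, and it is not: writing $u=v(\rho)$ with $\rho=|(x,t)|$, the equation $\mathrm{div}(t^{s}\na u)=0$ forces $v''+\frac{n+s}{\rho}v'=0$, so only (essentially) power-law data have minimizers that are radial in $(x,t)$ --- certainly not the stretched Gaussians you propose as extremizers. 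Consequently your supremum runs over strictly larger quotients and computes a different constant (the best constant for radial \emph{boundary data} with optimal extension, lying between $C_{LS,r}$ and the unknown $C_{LS}$), not the $C_{LS,r}$, $C_{LH,r}$ of the theorem. The paper's proof keeps the energy local: for $u=v(|(x,t)|)$ it equals $\om_{n,s}\int_0^\infty v'(r)^2r^{\,n+s}\,dr$ while the trace entropy is $\om_{n-1}\int_0^\infty v^2\ln(v^2)\,r^{n-1}\,dr$, and the change of variable $r\mapsto r^{2/(1-s)}$ turns the quotient into a one-dimensional logarithmic Sobolev problem in fractional dimension $2n/(1-s)$ (solved by mass transport, with explicit Gaussian extremizers) and, for \eqref{eq:CHSr}, into the one-dimensional logarithmic Hardy inequality of Del Pino--Dolbeault--Filippas--Tertikas. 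This locality is precisely what makes the constants computable in closed form.

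Even if one accepted your reformulation as the target, the proof is not there. The ``main computation'' $\|e^{-|x|^{1-s}/2}\|_{\dHs}^{2}$ is acknowledged but never performed, and there is no reason to expect a closed form; the assertion that the stretched Gaussian solves the nonlocal Euler--Lagrange equation $(-\De)^{(1-s)/2}f=(\lam+\mu\ln f^2)f$ is reverse-engineered from the answer rather than verified; and in part (ii) the claimed collapse of the nonlocal energy to a one-dimensional Dirichlet form under the Emden--Fowler substitution is exactly what a ground-state representation of a fractional operator does \emph{not} deliver (its remainder is again a nonlocal double integral). Both the setup and the key analytic steps therefore need to be replaced by the local radial reduction described above.
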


The rest of this paper is organized as follows. In the next section, we collect some useful properties of the hypergeometric functions which are used intensively in this paper. In section \S3, we prove Theorem \ref{maintheorem}. Section \S4 is devoted to the proof of Theorem \ref{maintheorem2} and Theorem \ref{maintheorem3} and derive some their consequences. In the last section, we prove the logarithmic Sobolev trace inequalities and the logarithmic Hardy trace inequalities presented in Theorem \ref{logHS}, and compute the constants $C_{LS,r}(n,s)$ and $C_{LH,r}(n,s)$.

%%%%%%%%%%%%%%%%%%%%%%%%%%%%%%%%%%%%%%%%%%%%%%%%%%%%%%%%%%%%%%%%%%%%

\section{Preliminaries}
In this section, we collect some main properties of hypergeometric functions that are extensively used throughout the next sections. We refer the readers to the books \cite[Section $15$]{AS64}, \cite[Section $2$]{PZ03} for more details about these functions. 

Let $a,b,c$ be complex numbers. Considering the hypergeometric differential equation
\begin{equation}\label{eq:hyperequation}
z(1-z) \omega''(z) + [c-(a+b+1)z] \omega'(z) -ab \omega(z) =0.
\end{equation}
If $c$ is not an integer, then the general solution of \eqref{eq:hyperequation} is given by 
\begin{equation}\label{eq:hypersolution}
\omega(z) = C_1 F(a,b,c,z) + C_2 z^{1-c} F(a-c+1, b-c+1, 2-c,z),
\end{equation}
for some complex constants $C_1,C_2$ (see \cite[page $562$]{AS64} or \cite[page $257$]{PZ03}). Here the hypergeometric function $F(a,b,c,z)$ is defined by the Gauss series (see \cite[page $556$]{AS64})
\begin{equation}\label{eq:hyperfunction}
F(a,b,c,z) = \sum_{k=0}^\infty \frac{(a)_k(b)_k}{(c)_k} \frac{z^k}{k!},
\end{equation}
on the disk $|z|< 1$ and by analytic continuation on whole complex plane cut along the interval $[1,\infty)$. We also use the notations $(a)_k = a (a+1)\cdots (a+k)$ and $(a)_0 =1$ for convenience. 

Note that the hypergeometric serie \eqref{eq:hyperfunction} is absolutely convergent if $|z| < 1$. The convergence also extends over the circle $|z| =1$ if $c-a-b >0$, while the serie converges at all points of the circle except the point $z =1$ if $c-a-b =0$. More precisely, we have the following asymptotic behavior of $F(a,b,c,z)$ when $z$ tends to $1$ (see \cite{AVF12}):
\begin{equation}\label{eq:lonhon}
 F(a,b,c,1) =\frac{\Gam(c)\Gam(c-b-a)}{\Gam(c-a)\Gam(c-b)}, \quad \text{if } c > a+b
\end{equation}
\begin{equation}\label{eq:bang}
\lim\limits_{z\to 1}\frac{F(a,b,c,z)}{\ln(1-z)} = -\frac{\Gam(a+b)}{\Gam(a)\Gam(b)},\quad \text{if } c = a+b
\end{equation}
\begin{equation}\label{eq:nhohon}
\lim\limits_{z\to 1}\frac{F(a,b,c,z)}{(1-z)^{c-a-b}} = \frac{\Gam(c)\Gam(a+b-c)}{\Gam(a)\Gam(b)},\quad\text{if } c < a+b.
\end{equation}

In the sequel, we quote some special expression formulas for hypergeometric function $F(a,b,c,z)$ that are useful for our purpose:
\begin{description}
%\item (i) If $a+b > c$ then we have (see \cite[$15.3.6$]{AS64})
%\begin{align}\label{eq:daochieu}
%F(a,b,c,z) &=\frac{\Gamma(c) \Gamma(c-a-b)}{\Gamma(c-a)\Gamma(c-b)} F(a,b,a+b-c+1, 1-z)\notag\\
%&\quad +(1-z)^{c-a-b}\frac{\Gamma(c)\Gamma(a+b-c)}{\Gamma(a)\Gamma(b)}F(c-a,c-b,c-a-b+1,1-z),
%\end{align}
%when $|\arg(1-z)| < \pi$.
\item (i) If none of $a,b,c, c-a, c-b, a-b,$ and $b-a$ is equal to a nonpositive integer, then we have (see \cite[$15.3.7$]{AS64})
\begin{align}\label{eq:nghichdao}
F(a,b,c,z) &= \frac{\Gam(c) \Gam(b-a)}{\Gam(b) \Gam(c-a)} (-z)^{-a} F\lt(a, 1-c+a,1-b+a,\frac1z\rt)\notag\\
&\quad +\frac{\Gam(c) \Gam(a-b)}{\Gam(a) \Gam(c-b)} (-z)^{-b} F\lt(b, 1-c+b,1-a+b,\frac1z\rt),
\end{align}
when $|\arg(-z)| < \pi$.
\item (ii) If $c =a+b$, then we have (see \cite[$15.3.10$]{AS64})
\begin{align}\label{eq:mbang0}
F(a,b,a+b,z)=\frac{\Gamma(a+b)}{\Gamma(a)\Gamma(b)} \sum_{k=0}^\infty & \frac{(a)_k(b)_k}{k!}[2\psi(k+1) -\psi(a+k)\notag\\
& -\psi(b+k) -\ln(1-z)](1-z)^k,
\end{align}
when $|\arg(1-z)| < \pi$ and $|1-z| < 1$. Here $\psi$ denotes the logarithmic derivative of Gamma function, i.e, $\psi(z) = \frac{\Gamma'(z)}{\Gamma(z)}.$
%\item(iii) If $c = a+b +m$ for some integers $m =1,2,\ldots$, then we have (see \cite[$15.3.11$]{AS64})
%\begin{align}\label{eq:mduong}
%&F(a,b,a+b+m,z)\notag\\
%&= \frac{\Gamma(m)\Gamma(a+b+m)}{\Gamma(a+m)\Gamma(b+m)}\sum_{k=0}^{m-1} \frac{(a)_k (b)_k}{k! (1-m)_k} (1-z)^k\notag\\
%&\quad -\frac{\Gamma(a+b+m)}{\Gamma(a)\Gamma(b)} (z-1)^m\sum_{k=0}^\infty \frac{(a+m)_k(b+m)_k}{k!(k+m)!}(1-z)^k [\ln(1-z) -\psi(k+1)\notag\\
%&\quad\quad\quad -\psi(k+m+1) +\psi(a+k+m) + \psi(b+k+m)]
%\end{align}
%when $|\arg(1-z)| < \pi$ and $|1-z| < 1$.
\item (iii) If $a\notin \Z$ then we have (see \cite[$15.3.13$]{AS64})
\begin{align}\label{eq:logarith}
F(a,a,2a,z) = \frac{\Gam(2a)}{\Gam(a)^2 (-z)^a} \sum_{k=0}^\infty &\frac{(a)_k(1-a)_k}{(k!)^2} \frac1{z^k} [\ln(-z) + 2\psi(k+1)\notag\\
 &-\psi(a+k) - \psi(a-k)]
\end{align}
when $|\arg(-z)| < \pi, |z| > 1$. 
\item (iv) The hypergeometric function satisfies the following differential formulas (see for instance \cite[$15.2.1$, $15.2.4$, $15.2.6$, $15.2.9$]{AS64})
\begin{equation}\label{eq:daohamhyper}
\frac{d}{dz}F(a,b,c,z) = \frac{ab}c F(a+1,b+1,c+1,z),
\end{equation}
\begin{equation}\label{eq:daohamhyper1}
\frac{d^n}{dz^n}[z^{c-1} F(a,b,c,z)] = (c-n)_n z^{c-n-1}F(a,b,c-n,z),
\end{equation}
\begin{equation}\label{eq:daohamhypercapn}
\frac{d^n}{dz^n}[(1-z)^{a+b-c}F(a,bc,z)] = \frac{(c-a)_n(c-b)_n}{(c)_n} (1-z)^{a+b-c-n} F(a,b,c+n,z),
\end{equation} 
\begin{equation}\label{eq:daohamhypercapn2}
\frac{d^n}{dz^n}[z^{c-1}(1-z)^{a+b-c}F(a,b,c,z)] =(c-n)_n z^{c-n-1}(1-z)^{a+b-c-n} F(a-n,b-n,c-n,z).
\end{equation}
\end{description}

Let us conclude this section by the following useful lemma.
\begin{lemma}\label{fundamental2}
Let $a,b > 0$, $c\in (0,1)$ be such that $a+b \geq c$, setting
$$C =-\frac{\Gamma(c) \Gamma(a+1-c) \Gamma(b+1-c)}{\Gamma(2-c) \Gamma(a)\Gamma(b)},$$
and the function
$$\eta(z) = F(a,b,c,z) + C z^{1-c} F(a+1-c,b+1-c,2-c,z).$$
Then there exists the limit $\lim\limits_{z\to 1} \eta(z)$.
\end{lemma}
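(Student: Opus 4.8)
The plan is to exploit the fact that $\eta$ is itself a solution of the hypergeometric equation \eqref{eq:hyperequation} with parameters $a,b,c$, and to recognise the constant $C$ as the unique choice that removes the singular component of $\eta$ at the point $z=1$. Concretely, by \eqref{eq:hypersolution} each of $F(a,b,c,z)$ and $z^{1-c}F(a+1-c,b+1-c,2-c,z)$ solves \eqref{eq:hyperequation}, hence so does their linear combination $\eta$. The point $z=1$ is a regular singular point of \eqref{eq:hyperequation} with indicial exponents $0$ and $c-a-b$, and the hypothesis $a+b\ge c$ forces $c-a-b\le 0$. A basis of solutions near $z=1$ is $\{\phi_0,\phi_1\}$, where $\phi_0(z)=F(a,b,a+b+1-c,1-z)$ is the Frobenius solution of exponent $0$ (well defined since $a+b+1-c\ge 1$), analytic at $z=1$ with $\phi_0(1)=1$, while a second independent solution $\phi_1$ is strictly more singular as $z\to1$: it behaves like $(1-z)^{c-a-b}$ when $c-a-b<0$ and like $\log(1-z)$ when $c-a-b=0$. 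Writing $\eta=p\,\phi_0+q\,\phi_1$, the limit $\lim_{z\to1}\eta(z)$ exists (and equals $p$) exactly when $q=0$, so the whole lemma reduces to showing that the prescribed $C$ annihilates the $\phi_1$-component.

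The key reduction is that, since $\phi_0$ is analytic, contributing only non-negative integer powers of $1-z$ and no logarithm, the most singular term of $\eta$ as $z\to1$ comes entirely from $q\,\phi_1$; consequently $q$ is determined by the leading singular coefficient of $\eta$ alone. Thus it suffices to cancel the leading singularities of $F(a,b,c,z)$ and of $C\,z^{1-c}F(a+1-c,b+1-c,2-c,z)$. Because $z^{1-c}\to1$ as $z\to1$, these coefficients are supplied by \eqref{eq:nhohon} when $c-a-b<0$ and by \eqref{eq:bang} when $c-a-b=0$. In both regimes the leading singular coefficient of $F(a,b,c,\cdot)$ divided by that of $F(a+1-c,b+1-c,2-c,\cdot)$ comes out to be $\Gam(c)\Gam(a+1-c)\Gam(b+1-c)/\big(\Gam(2-c)\Gam(a)\Gam(b)\big)$, and the $C$ given in the statement is precisely minus this quantity. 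Hence the leading singular terms cancel, $q=0$, and $\eta=p\,\phi_0$ has the finite limit $p$.

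The delicate point, which I expect to be the main obstacle, is the case in which $c-a-b$ is a non-positive integer: then the two indicial exponents differ by an integer, $\phi_1$ acquires a logarithmic factor, and one must ensure that cancelling a single leading term really annihilates the \emph{entire} singular part and not merely its top order. The solution-space description resolves this cleanly: whatever logarithms appear, the leading behaviour of $\phi_1$ is still the pure power $(1-z)^{c-a-b}$ (respectively $\log(1-z)$ when the exponents coincide), which is strictly more singular than every term contributed by $\phi_0$; therefore the coefficient of this leading term equals $q$, and a single cancellation already forces $q=0$. As an alternative I could expand $\eta$ directly through the explicit connection and inversion formulas \eqref{eq:nghichdao}, \eqref{eq:mbang0} and \eqref{eq:logarith}, matching coefficients term by term, but the Frobenius viewpoint makes transparent why exactly one cancellation suffices.
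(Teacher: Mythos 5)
Your proof is correct, but it takes a genuinely different route from the paper's. The paper argues by a double induction on $m=\lfloor a+b-c\rfloor$ (treating $a+b-c=m$ and $a+b-c=m+\alpha$, $\alpha\in(0,1)$, as separate cases): the choice of $C$ together with \eqref{eq:nhohon} gives $\lim_{z\to1}(1-z)^{a+b-c}\eta(z)=0$, and then L'H\^{o}pital's rule combined with the differentiation formulas \eqref{eq:daohamhypercapn} and \eqref{eq:daohamhypercapn2} rewrites $\lim_{z\to1}\eta(z)$ as a multiple of $\lim_{z\to1}\eta_1(z)$, where $\eta_1$ is the analogous combination with parameters $(a,b,1+c)$ and $(a-c,b-c,1-c)$, i.e.\ with $a+b-c$ lowered by one; the base cases are settled by \eqref{eq:lonhon} and \eqref{eq:bang}. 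You instead observe that $\eta$ is itself a solution of \eqref{eq:hyperequation} (legitimate, since $c\in(0,1)$ is not an integer, so \eqref{eq:hypersolution} applies) and invoke the Frobenius classification of solutions at the regular singular point $z=1$, with exponents $0$ and $c-a-b\le 0$: in the two-dimensional solution space one basis element is analytic at $z=1$, the other has strict leading behaviour $(1-z)^{c-a-b}$ (or $\ln(1-z)$ when $c=a+b$) regardless of any logarithmic corrections, so its coefficient $q$ in $\eta$ is read off from the single leading asymptotic \eqref{eq:nhohon} or \eqref{eq:bang}, and the stated $C$ is exactly the value making $q=0$; the computation of the ratio of leading coefficients, namely $\Gamma(c)\Gamma(a+1-c)\Gamma(b+1-c)/\bigl(\Gamma(2-c)\Gamma(a)\Gamma(b)\bigr)$ in both regimes, checks out. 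Your point that one cancellation annihilates the entire singular part --- because the residual component is automatically analytic --- is precisely what the paper's induction labours to establish step by step, and your argument handles integer and non-integer values of $a+b-c$ uniformly, avoiding both the induction and the (unstated) verification that the transformed constant appearing in $\eta_1$ is again of the prescribed form. What the paper's route buys in exchange is that it stays entirely within the explicit formulas collected in Section 2 and never appeals to the general structure theory of solutions near a regular singular point.
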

\begin{proof}
We divide our proof into two cases:

\emph{Case $1$}: $a+b = c+m$ for some integer $m =0,1,2,\ldots$. We argue inductively in $m$. If $m =0$ the conclusion follows from \eqref{eq:lonhon} and the choice of $C$. Suppose that the conclusion holds for any $a+b =c +k$, $0\leq k \leq m$. We will show that it also holds for any $a+b =c+m+1$. Indeed, by the choice of $C$ and \eqref{eq:nhohon}, we then have $\lim_{z\to 1} (1-z)^{m+1} \eta(z) = 0$. Making the uses of L'H\^{o}pital theorem and the differential formulas \eqref{eq:daohamhypercapn}, \eqref{eq:daohamhypercapn2}, we obtain
\[
\lim_{z\to 1} \eta(z) = \lim_{z\to 1} \frac{[(1-z)^{m+1}\eta(z)]'}{[(1-z)^{m+1}]'} =-\frac{(c-a)(c-b)}{(m+1)c}\lim_{z\to 1} \eta_1(z),
\]
where
\[
\eta_1(z) = F(a,b,1+c,z) +\frac{c(1-c)C}{(c-a)(c-b)}z^{-c} F(a-c,b-c,1-c,z)
\]
Since $a+b = c+1 +m$, the choice of $C$ and our induction assumption, there exists the limit $\lim_{z\to 1}\eta_1(z)$. So does the limit $\lim_{z\to 1} \eta(z)$.

\emph{Case $2$:} $a+b = c+ m + \al$ for $\alpha \in (0,1)$ and for some integer $m =0,1,2,\ldots$. We also argue inductively in $m$. If $m =0$, it is implied from the choice of $C$ and \eqref{eq:nhohon} that $\lim_{z\to 1} (1-z)^{\alpha} \eta(z) = 0$. Using L'H\^{o}pital theorem, the differential formulas \eqref{eq:daohamhypercapn}, \eqref{eq:daohamhypercapn2} and \eqref{eq:lonhon} implies the conclusion when $m=0$. The rest of argument is completely similar with the one of \emph{Case $1$}.
\end{proof}

%%%%%%%%%%%%%%%%%%%%%%%%%%%%%%%%%%%%%%%%%%%%%%%%%%%%%%%%%%%%%%%%%%%%
\section{Proof of Theorem \ref{maintheorem}}
We follow the ideas in \cite{AVF12,T15}. To do this, let us define an energy functional on $\dWds$ by
\begin{equation}\label{eq:energyfunctional}
J[u] = \int_{\mC} |\na u(x)|^2 d_\mC(x)^{s} dx - \frac{(\be -2)^2}{4} \int_{\mC} \frac{u(x)^2}{|x|^2} d_\mC(x)^{s} dx.
\end{equation}
The Euleur-Lagarange equation of the functional $J$ is given by
\begin{equation}\label{eq:ELequation}
\De u(x) +s\frac{\la \na u(x),\na d_\mC(x)\ra}{d_\mC(x)} +\frac{(\be-2)^2}{4}\frac{u(x)}{|x|^2} =0 \quad\mbox{in}\quad \mC.
\end{equation}
We next construct a positive solution $\vphi$ of \eqref{eq:ELequation} with the condition $\vphi(x) = |x|^{-(n_s-2)/2}$ for $x\in\pa\mC$. Writing $\vphi$ in the form
\begin{equation}\label{eq:changevariable}
\vphi(x) = |x|^{-\frac{n_s-2}2} \om\lt(\frac{d_\mC(x)^2}{|x|^2}\rt).
\end{equation}
Then $\om$ is solution of the equation
\begin{equation}\label{eq:equivalentversion}
\begin{cases}
z(z-1)\om''(z) +\lt(\frac{n_s}2 z-\frac{1+s}{2}\rt)\om'(z) +\lt[\frac{(n_s-2)^2}{16} -\frac{(\be-2)^2}{16}\rt]\om(z) =0, &\mbox{$z\in(0,1)$}\\
\om(0) =1,\quad \exists\lim\limits_{z\to 1} \om(z) \in \R.
\end{cases}
\end{equation}
Lemma \ref{fundamental2} implies that the function $\om$ must have the form
\begin{align}\label{eq:omega}
\om(z) &= F\lt(\frac{n_s+\be-4}{4}, \frac{n_s-\be}{4},\frac{1+s}{2}, z\rt)\notag \\
&\quad -\frac{1}{1-s} H(n,s,\be) z^{\frac{1-s}{2}} F\lt(\frac{n_s+\be}{4} -\frac{1+s}{2}, \frac{n_s-\be}{4}+\frac{1-s}{2},\frac{3-s}{2}, z\rt).
\end{align}

Hence we have shown that
\begin{proposition}\label{constructofsolution}
Let $s\in (-1,1)$, $2\leq \be < n_s$ and let $H(n,s,\be)$ be given by \eqref{eq:constantH}. Then the function
\begin{align}\label{eq:solution}
\vphi(x)& = \frac{F\lt(\frac{n_s+\be-4}4,\frac{n_s-\be}4, \frac{1+s}2,\frac{d_\mC(x)^2}{|x|^2}\rt)}{(|x|^2 +t^2)^{\frac{n_s-2}4}}\notag\\
&\quad -\frac{H(n,s,\be)d_\mC(x)^{1-s}}{(1-s) (|x|^2)^{\frac{n_s-2s}4}}F\lt(\frac{n_s+\be}4-\frac{1+s}{2},\frac{n_s-\be}4+\frac{1-s}{2}, \frac{3-s}{2},\frac{d_\mC(x)^2}{|x|^2}\rt)
\end{align}
solves the equation \eqref{eq:ELequation} with condition $\vphi(x) = |x|^{-\frac{n_s-2}2}$ on $\pa \mC$.
\end{proposition}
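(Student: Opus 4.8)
The plan is to confirm the construction by running it as a verification: reduce the elliptic equation \eqref{eq:ELequation} to the ODE \eqref{eq:equivalentversion} under the ansatz \eqref{eq:changevariable}, recognize that ODE as a Gauss hypergeometric equation, and then show that the two side conditions in \eqref{eq:equivalentversion} single out precisely the $\om$ of \eqref{eq:omega}, whence substitution yields \eqref{eq:solution}. The first thing I would record are the geometric facts that make a one-variable reduction possible. On a polyhedral convex cone, $d_\mC(x)=\min_i\la x,u_i\ra$ coincides, on the open dense stratum where a single constraint $i$ is active, with the \emph{linear} function $\la x,u_i\ra$. Consequently, on that stratum, $|\na d_\mC|=1$, $\De d_\mC=0$, and $\la x,\na d_\mC\ra=\la x,u_i\ra=d_\mC$, together with $\la x,\na|x|\ra=|x|$ and $\la\na|x|,\na d_\mC\ra=d_\mC/|x|$. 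These identities, plus the homogeneity of the prefactor $|x|^{-(n_s-2)/2}$, are exactly what force the dependence to funnel through the single variable $z=d_\mC^2/|x|^2$.

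Next I would propagate the ansatz $\vphi=|x|^{-(n_s-2)/2}\om(z)$ through the operator $\De\vphi+s\,d_\mC^{-1}\la\na\vphi,\na d_\mC\ra+\tfrac{(\be-2)^2}{4}|x|^{-2}\vphi$. Using the identities above, each of $\De|x|^{-(n_s-2)/2}$, the cross terms, and the chain-rule contributions from $\om'(z)$ and $\om''(z)$ reorganizes into powers of $z$; after dividing by the common factor $|x|^{-(n_s-2)/2-2}$ one is left with exactly \eqref{eq:equivalentversion}. I expect this coefficient-chasing to be the main obstacle: it is routine but bookkeeping-heavy, and care is needed with the cross term $\la\na\vphi,\na d_\mC\ra$ and with the fact that $d_\mC$ is curvature-free only on the smooth stratum. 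Since \eqref{eq:ELequation} is required to hold in the interior of $\mC$, it suffices to carry out the computation on that stratum, the residual ridge set being negligible.

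I would then identify \eqref{eq:equivalentversion} with the hypergeometric equation \eqref{eq:hyperequation}. Matching coefficients gives $c=(1+s)/2$, $a+b=(n_s-2)/2$, and $ab=[(n_s-2)^2-(\be-2)^2]/16$; solving the quadratic yields $a=(n_s+\be-4)/4$ and $b=(n_s-\be)/4$. As $c=(1+s)/2\notin\Z$ for $s\in(-1,1)$, the general solution is given by \eqref{eq:hypersolution}, namely $\om=C_1F(a,b,c,z)+C_2z^{1-c}F(a-c+1,b-c+1,2-c,z)$, and one computes $a-c+1=(n_s+\be-2-2s)/4$, $b-c+1=(n_s-\be+2-2s)/4$, $2-c=(3-s)/2$, matching the arguments appearing in \eqref{eq:omega}.

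Finally I would pin the constants from the two conditions in \eqref{eq:equivalentversion}. Since $F(a,b,c,0)=1$ and $1-c=(1-s)/2>0$, the normalization $\om(0)=1$ forces $C_1=1$. For the existence of $\lim_{z\to1}\om(z)$ I would invoke Lemma \ref{fundamental2}, whose hypotheses hold here: $a,b>0$ (from $\be\ge2$, $n\ge2$, $s>-1$, and $\be<n_s$), $c\in(0,1)$, and $a+b=(n-1+s)/2\ge(1+s)/2=c$, the last being exactly $n\ge2$. The lemma then mandates $C_2=C=-\Gam(c)\Gam(a+1-c)\Gam(b+1-c)/[\Gam(2-c)\Gam(a)\Gam(b)]$; a one-line gamma simplification using $\Gam\!\big(\tfrac{3-s}{2}\big)=\tfrac{1-s}{2}\Gam\!\big(\tfrac{1-s}{2}\big)$ rewrites this as $-H(n,s,\be)/(1-s)$ with $H$ as in \eqref{eq:constantH}, matching \eqref{eq:omega} exactly. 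Substituting this $\om$ into \eqref{eq:changevariable} and simplifying the powers of $|x|$ and $d_\mC$ gives \eqref{eq:solution}, and evaluating on $\pa\mC$, where $d_\mC=0$ so $z=0$ and the $d_\mC^{1-s}$ term vanishes, delivers the boundary value $|x|^{-(n_s-2)/2}$.
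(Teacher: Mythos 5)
Your proposal is correct and follows essentially the same route as the paper: the ansatz $\vphi=|x|^{-(n_s-2)/2}\om(d_\mC^2/|x|^2)$ reducing \eqref{eq:ELequation} to the ODE \eqref{eq:equivalentversion}, identification with the hypergeometric equation \eqref{eq:hyperequation} via \eqref{eq:hypersolution}, and Lemma \ref{fundamental2} to fix the second constant as $-H(n,s,\be)/(1-s)$. You in fact supply more detail than the paper does (the identities $|\na d_\mC|=1$, $\De d_\mC=0$ on the smooth stratum, and the coefficient matching $a=(n_s+\be-4)/4$, $b=(n_s-\be)/4$, $c=(1+s)/2$), all of which checks out.
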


Note that the solution $\vphi$ above does not belong to $\dWds$. Let us collect some useful properties of the function $\om$ defined by \eqref{eq:omega}.

\begin{proposition}\label{phiduong}
Let $s\in (-1,1)$, and let $\om$ be given by \eqref{eq:omega}, then 
\begin{description}
\item (i) $\om(z) > 0$ for any $z\in [0,1]$.
\item (ii) The following equality holds true
\begin{equation}\label{eq:limom}
\lim\limits_{z\to 0^+} z^s\, \frac{d (\om(z^2))}{dz} = -H(n,s,\be).
\end{equation}
\item (iii) $\om'(z) < 0$ for any $z\in [0,1]$ and there exists constant $C>0$ such that
\begin{equation}\label{eq:om'est}
|\om'(z)| \leq C(1+ z^{-\frac{1+s}2}), \quad \forall\, z\in [0,1].
\end{equation}
\end{description}
\end{proposition}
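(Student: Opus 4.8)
The key observation is that the function $\om$ in \eqref{eq:omega} is exactly the function $\eta$ of Lemma~\ref{fundamental2} for the parameters $a=\frac{n_s+\be-4}{4}$, $b=\frac{n_s-\be}{4}$, $c=\frac{1+s}{2}$; indeed $a+1-c=\frac{n_s+\be-2-2s}{4}$, $b+1-c=\frac{n_s-\be+2-2s}{4}$, $2-c=\frac{3-s}{2}$, and using $\Gam\lt(\frac{3-s}{2}\rt)=\frac{1-s}{2}\Gam\lt(\frac{1-s}{2}\rt)$ one checks that $-H(n,s,\be)/(1-s)$ is precisely the constant $C$ prescribed there. Thus $\lim_{z\to1}\om(z)$ exists and is finite. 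The plan is to exploit this finiteness to rewrite the two-term expression \eqref{eq:omega} as a \emph{single} Gauss series expanded about $z=1$, which makes all three claims transparent.

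Since $\om$ solves the hypergeometric equation \eqref{eq:equivalentversion} (parameters $(a,b,c)$ with $a+b+1-c=\frac n2$), and the exponents at the singular point $z=1$ are $0$ and $c-a-b=-\frac{n-2}{2}\le0$, the solutions that stay finite at $z=1$ form a one-dimensional space spanned by $F(a,b,\frac n2,1-z)$ (for $n>2$ the second independent solution blows up; for $n=2$ the exponents coincide and it blows up logarithmically). Because $\om$ is finite at $z=1$, I would conclude $\om(z)=K\,F(a,b,\frac n2,1-z)$. Evaluating at $z=0$, using $\om(0)=1$ and the Gauss value \eqref{eq:lonhon} (legitimate since $\frac n2-a-b=\frac{1-s}{2}>0$), determines
\[
K=\frac{\Gam\lt(\frac{n_s+\be-2-2s}{4}\rt)\Gam\lt(\frac{n_s-\be+2-2s}{4}\rt)}{\Gam\lt(\frac n2\rt)\Gam\lt(\frac{1-s}{2}\rt)}.
\]
Under $2\le\be<n_s$ and $s\in(-1,1)$ all four arguments are positive, so $K>0$; and since $F(a,b,\frac n2,\cdot)$ is a Gauss series with positive coefficients, convergent at $1$, it is strictly positive on $[0,1]$. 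Hence $\om>0$ on $[0,1]$, which is part~(i).

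Differentiating the representation with \eqref{eq:daohamhyper} yields $\om'(z)=-K\frac{ab}{n/2}F(a+1,b+1,\frac n2+1,1-z)$, which is manifestly $<0$ on $[0,1]$ because $K>0$ and the series is positive, giving the first claim of~(iii). For the bound \eqref{eq:om'est} I would note $\frac n2+1-(a+1)-(b+1)=-\frac{1+s}{2}<0$, so by \eqref{eq:nhohon} this factor behaves like $(1-w)^{-(1+s)/2}$ as $w=1-z\to1$; thus $|\om'(z)|\le C z^{-(1+s)/2}$ near $0$, while away from $0$ it is continuous and bounded, which is \eqref{eq:om'est}. Finally~(ii) follows from the same asymptotics: by the chain rule $\frac{d}{dz}\om(z^2)=2z\,\om'(z^2)\sim-2M z^{-s}$ as $z\to0^+$, where $M=K\,\frac{\Gam(\frac n2)\Gam(\frac{1+s}{2})}{\Gam(a)\Gam(b)}$ is the coefficient produced by \eqref{eq:nhohon}; multiplying by $z^s$ and letting $z\to0^+$ gives the limit $-2M$, and a direct Gamma-factor computation shows $2M=H(n,s,\be)$ as in \eqref{eq:constantH}, proving \eqref{eq:limom}.

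The crux is the passage to the single-series representation. One must verify rigorously that the solutions of \eqref{eq:equivalentversion} finite at $z=1$ are one-dimensional --- this is exactly where the sign $c-a-b\le0$ is used, and where the borderline case $n=2$, whose second solution carries a $\ln(1-z)$, needs a separate sentence --- and that $\om$, whose finiteness at $1$ is known only through Lemma~\ref{fundamental2}, indeed lies in that space. Once this is secured, everything else is routine: positivity of Gauss series and the tabulated endpoint asymptotics \eqref{eq:lonhon} and \eqref{eq:nhohon}, together with the single genuinely computational point, namely the verification that $2M$ reproduces the constant $H(n,s,\be)$ of \eqref{eq:constantH}.
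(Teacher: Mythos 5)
Your proof is correct, and it takes a genuinely different route from the paper. The paper establishes (i) by a case split: for $n=2$ a direct comparison of the two Gauss series in \eqref{eq:omega} using log-convexity of $\Gamma$, and for $n\geq 3$ a maximum-principle argument with the auxiliary function $v(z)=(1-z)^{\gamma}\om(z)$, followed by a separate and rather delicate barrier argument (with $u(z)=e^{-\al(z-1)}-1$) to rule out $\om(1)=0$; part (iii) is then obtained by another maximum-principle step starting from $\om'(1)<0$. You instead use Lemma \ref{fundamental2} only to certify that $\om$ is bounded at $z=1$, and then invoke the Frobenius/connection-formula structure at that singular point to identify $\om$ with a positive multiple of the single series $F\lt(a,b,\frac n2,1-z\rt)$, whose positive coefficients give (i), whose term-by-term derivative gives the sign in (iii), and whose tabulated endpoint asymptotics \eqref{eq:lonhon}, \eqref{eq:nhohon} give \eqref{eq:om'est} and the limit \eqref{eq:limom} (I checked that your constant $2M$ does reduce to $H(n,s,\be)$ of \eqref{eq:constantH}, consistent with \eqref{eq:daoham}). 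Your approach buys a unified treatment of $n=2$ and $n\geq 3$ and makes the three claims fall out of one representation; the paper's approach avoids any discussion of the local solution theory at $z=1$ at the cost of several ad hoc auxiliary functions. The one point you must nail down — and you correctly flag it — is that the second Frobenius solution at $z=1$ is unbounded in \emph{every} case $n\geq 2$: when the exponent difference $\frac{n-2}{2}$ is zero or a positive integer a logarithm may or may not appear, but the leading term $(1-z)^{-(n-2)/2}$ (or $\ln(1-z)$ when $n=2$) always forces blow-up, so the bounded solutions do form a one-dimensional space and $\om=K\,F(a,b,\frac n2,1-z)$ follows. A minor remark: at $z=0$ the derivative $\om'$ is actually $-\infty$ for $s>-1$, so the statement ``$\om'(z)<0$ on $[0,1]$'' should be read on $(0,1]$; this looseness is present in the paper as well and does not affect anything.
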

From \eqref{eq:limom} we easily deduce that 
\begin{equation}\label{eq:limphi}
\lim\limits_{d_\mC(x)\to 0}\frac{d_\mC(x)^{s}\la\na \vphi(x),\na d_\mC(x)\ra}{|x|^{s-1}\vphi(x)} = -H(n,s,\be).
\end{equation}
\begin{proof}
Let us first prove part $(i)$. If $n=2$ then the conclusion immediately follows from the definition of the hypergeometric functions, $s\in (-1,1)$ and the simple inequality
$$\frac{\Gam(a)}{\Gam(b)} \geq \frac{\Gam(a+\al)}{\Gam(b+\al)},\quad b > a >0,\, \forall\, \al \geq 0,$$
which is a consequence of the convexity of the function $t\mapsto   \ln (\Gam(t))$. 

The case $n\geq 3$ is more complicated and is proved by using the maximum principle. To do this, let us define an auxiliary function
$$v(z)= (1-z)^{\ga} \om(z),$$
for some $\ga >0$ which will be chosen later. It is obvious that $v(0) =\om(0) =1$ and $v$ solves the equation
$$-z(1-z)^2 v''(z) + \lt(\lt(\frac{n_s}2 -2\ga\rt)z -\frac{1+s}2\rt)(1-z) v'(z) + f(z) v(z)=0,$$
with
$$f(z) = \lt(\frac{(n_s-2)^2}{16} -\frac{(\be-2)^2}{16} -\ga\frac{1+s}2\rt)(1-z) +\ga\lt(\frac n2 -1 -\ga\rt) z.$$
Because of both of the hypergeometric functions appearing in \eqref{eq:omega} satisfy the condition $c-a-b = (2-n)/2 \leq 0$, then by \eqref{eq:lonhon} and \eqref{eq:bang}, we have $v(1) =0$.

Since $\be < n_s$ and $n\geq 3$, we can choose $\ga$ such that
$$0 < \ga < \min\lt\{\frac{2}{1+s}\lt(\frac{(n_s-2)^2}{16} -\frac{(\be-2)^2}{16}\rt), \frac n2 -1\rt\},$$
which then implies $f(z) > 0$ for any $z\in (0,1)$. The function $v$ attains its minimum at a point $z_0\in [0,1]$. Since $v(0) =1 >v(1) =0$, we must have $z_0 > 0$. If $z_0 =1$ then $v(z) \geq v(1) =0$ for any $z\in [0,1]$. Otherwise, we have $v'(z_0) =0$, $v''(z_0) \geq 0$ and hence
$$v(z) \geq v(z_0) = \frac{z_0(1-z_0)^2 v''(z_0)}{f(z_0)} \geq 0,\quad\forall\, z\in [0,1].$$
We have shown that $v\geq 0$ on $[0,1]$, hence so is $\om$.

A standard maximum principle (see, e.g, Theorem $3$ in \cite[page $6$]{PW}) shows that $\om(z)> 0$ for any $z\in (0,1)$.

It remains to verify that $\om(1)>0$. We argue by contradiction. Suppose that $\om(1) =0$. It follows from the differential formulas \eqref{eq:daohamhyper} and \eqref{eq:daohamhyper1} that
\begin{align}\label{eq:daoham}
\om'(z)& = \frac{\lt(\frac{n_s+\be}4-1\rt)\frac{n_s-\be}4}{\frac{1+s}2}F\lt(\frac{n_s+\be}4, \frac{n_s-\be}4+1,\frac{3+s}2,z\rt)\notag\\
&\quad + C\frac{1-s}2 z^{-\frac{1+s}2}F\lt(\frac{n_s+\be}4-\frac{1+s}2, \frac{n_s-\be}4+\frac{1-s}2,\frac{1-s}2,z\rt)
\end{align} 
and
\begin{align*}
\om''(z)& = \frac{\frac{n_s+\be}4\lt(\frac{n_s+\be}4-1\rt)\frac{n_s-\be}4\lt(\frac{n_s-\be}4+1\rt)}{\frac{1+s}2 \frac{3+s}2}F\lt(\frac{n_s+\be}4 +1, \frac{n_s-\be}4+2,\frac{5+s}2,z\rt)\notag\\
&\quad - C\frac{1-s}2 \frac{1+s}2 z^{-\frac{3+s}2}F\lt(\frac{n_s+\be}4-\frac{1+s}2, \frac{n_s-\be}4+\frac{1-s}2,-\frac{1+s}2,z\rt)
\end{align*}
Because of $C = -H(n,s,\be)/(1-s)$ and Lemma \ref{fundamental2}, we see that the limits $\lim\limits_{z\to 1} \om'(z)$ and $\lim\limits_{z\to 1} \om''(z)$ exist. Hence $\om \in C^1((0,1])$ and by \eqref{eq:equivalentversion}, we get
\begin{equation}\label{eq:danhgiadaoham}
\om'(1) = -\frac 2n \lt(\frac{(n_s-2)^2}{16} -\frac{(\be-2)^2}{16}\rt) \om(1)= 0.
\end{equation}
Fix a $\de \in (0,1)$ such that $\de n_s -1-s >0$. Let us define the new auxiliary function $u(z) = e^{-\alpha(z-1)} -1$ with $\alpha >0$ will be chosen below and the second differential operator
$$L = z(z-1) \frac{d^2}{dz^2} + \lt(\frac{n_s}{2} -\frac{1+s}{2}\rt)\frac{d}{dz}+ \lt(\frac{(n_s-2)^2}{16} -\frac{(\be-2)^2}{16}\rt).$$
An easy computation shows that
$$e^{\alpha(z-1)}Lu(z) = z(z-1)\alpha^2 -\lt(\frac{n_s}{2}z -\frac{1+s}{2}\rt) \alpha +\lt(\frac{(n_s-2)^2}{16} -\frac{(\be-2)^2}{16}\rt)(1 -e^{\alpha(z-1)}).$$
Therefore, by choosing $\alpha >0$ large enough, $Lu(z) < 0$ for all $z\in [\de,1]$. Since $\om(\de) >0$ and $u(\de) >0$, we can choose $0 < \ep < \om(\de)/u(\de)$. Let us define the function
$$\om_\ep(z) = \om(z) -\ep u(z).$$
We then have $L\om_\ep(z) >0$ on $[\delta,1)$ and $\om_\ep(\delta) >\om_\ep(1) =0$. An application of maximum principle yields that $\om_\ep$ has to attain its minimun on $[\de,1]$ at $z =1$. Therefore, we have $\om_\ep(z) \geq 0$ for any $z\in [\de,1]$. As a consequence, we have $\om_\ep'(1) \leq 0$, or equivalently
$$\om'(1) \leq -\ga \ep < 0$$ 
which contradicts with \eqref{eq:danhgiadaoham}. Therefore $\om(1) = 0$ cannot occur, or $\om(1) >0$.

The part $(ii)$ is immediately implied from \eqref{eq:daoham}.

By part $(i)$ we have
$$\om'(1) = -\frac 2n \lt(\frac{(n_s-2)^2}{16} -\frac{(\be-2)^2}{16}\rt) \om(1) <0.$$
It yields from \eqref{eq:daoham} that $\lim\limits_{z\to 0} \om'(z) < 0$. A standard maximum principle argument shows that $\om'(z) < 0$ for all $z$. The estimate \eqref{eq:om'est} is derived from \eqref{eq:daoham} and the fact $|\om'(1)| < \infty$.
\end{proof}

{\bf Proof of Theorem \ref{maintheorem}:} With the function $\vphi$ on hand, the proof of Theorem \ref{maintheorem} is completely similar with the proof of Theorems $1$ and $2$ in \cite{T15} by factorizing the function $u$ in the form $u =v \vphi$ (note that $\vphi$ is strict positive in $\mC$), by using divergence theorem and the limit \eqref{eq:limphi}. We refer the reader to the paper \cite{T15} for more details on the proof.

The optimality of constant $H(n,s,\beta)$ is verified by truncating the function $\vphi$ as follow. Taking a function $\phi\in C_0^\infty(\R^{n+1})$ such that $\phi(x,t) =1$ if $|(x,t)| \leq 1$, and $\phi(x,t) =0$ if $|(x,t)| \geq 2$. For $\ep > 0$, denote $\phi_\ep((x,t)) = \phi\lt((x,t)/\ep\rt)$, and $\vphi_\ep = \vphi(1-\phi_\ep) \phi_{\frac1\ep}.$ We can readily check that the functions $\vphi_\ep$ give us the optimality of $H(n,s,\beta)$ by letting $\ep$ tend to $0$.

We conclude this section by showing that inequality \eqref{eq:maininequality} can be improved by adding an extra positive term on the right hand side as done in \cite[Theorem 2]{T15} for the upper half space $\hs$. To do this, let us introduce the recursively defined functions, for $|x|\leq 1$,
\[
X_1(|x|) = \frac1{1-\ln |x|},\quad X_k(|x|) = X_1(X_{k-1}(|x|)),\quad k= 2,3,\ldots,
\]
and
\[
P_k(|x|) =X_1(|x|)X_2(|x|)\cdots X_k(|x|),\quad k= 1,2,\ldots.
\]
Let $U$ be a generic bounded domain of $\R^{n+1}$ such that the origin is in the interior of $U$. Denote $D = \sup_{x\in U\cap \mC} |x|$. Then the following inequality holds for any function $u\in C_0^\infty(U)$
\begin{align}\label{eq:improvedH}
\int_{U\cap\mC} |\na u(x)|^2 d_\mC(x)^2 dx &\geq \frac{(\beta-2)^2}4 \int_{\mC} \frac{u(x)^2}{|x|^2} d_\mC(x)^s dx + H(n,s,\be)\int_{\pa\mC}\frac{u(x)^2}{|x|^{1-s}} d\mH^n(x)\notag\\
&\quad + \frac14 \sum_{i=1}^\infty \int_{U\cap \mC} P_i(|x|/D)^2 \frac{u(x)^2}{|x|^2} d_\mC(x)^s dx.
\end{align} 
The proof of \eqref{eq:improvedH} is similar with the one of Theorem $2$ in \cite{T15}, hence we drop it here. 
%Indeed, by the homogeneity we can assume that $D =1$.  For any $k\geq 1$, define $\vphi_k(x) = \vphi(x) P_k(|x|)^{-1/2}$, then $\vphi_k$ solves the equation (see \cite{T15} for the case of upper half space)
%\[
%\De u(x) +s\frac{\la \na u(x),\na d_\mC(x)\ra}{d_\mC(x)} +\frac{(\be-2)^2}{4}\frac{u(x)}{|x|^2}+\frac{\vphi_k}{4|x|^2} \sum_{i=1}^k P_i(|x|)^2 =0,\quad \text{in}\quad \mC,
%\]
%with the condition $\vphi_k(x) = |x|^{-(n_s-2)/4}$ on $\pa \mC$. From Proposition \ref{phiduong}, we have
%\[
%\lim\limits_{d_\mC(x)\to 0}\frac{d_\mC(x)^{s}\la\na \vphi_k(x),\na d_\mC(x)\ra}{|x|^{s-1}\vphi_k(x)} = -H(n,s,\be).
%\]
%Hence the argument in the proof of \eqref{eq:maininequality} can be applied.

\section{Proof of Theorem \ref{maintheorem2} and Theorem \ref{maintheorem3}} 
Let us denote
$$J[u] = \int_0^\infty\int_{\R_+^n} |\na u|^2 t^s dx dt -\frac{\be^2}4 \i0i \int_{\Rn} \frac{u(x,t)^2}{x_n^2} t^s dx dt,$$
the energy functional associated to the inequality \eqref{eq:mainresult2}. Its Euler-Lagrange equation is given by
\begin{equation}\label{eq:general}
\De_{(x,t)} u(x,t) + s\frac{\pa_t u(x,t)}t + \frac{\be^2}4 \frac{u(x,t)^2}{x_n^2} =0, \quad (x,t) \in \Rn\times (0,\infty).
\end{equation}
For convenience, we use the notation $L_s = \De_{(x,t)} + s\pa_t/t $.

We next construct a positive solution $\phi$ for the equation \eqref{eq:general}. Writing $\phi$ in form $\phi(x,t) = x_n^{- s/2} \om\lt({t}/{x_n}\rt)$, then $\om$ solves the equation
\begin{equation}\label{eq:EL2}
(y^2 +1) \om''(y) + \lt((s+2)y + \frac sy\rt)\om'(y) + \frac{s(s+2) + \be^2}4 \om(y) =0,\quad y>0.
\end{equation} 

In the sequel, we use the following notation, for two function $f,g$ defined in a set $\Om$, $f\sim g$ if there exist two positive constants $C, c$ such that $c f \leq g\leq C f$ in $\Om$.

\begin{lemma}\label{solution2}
Given $s\in (-1,1)$, there exists a solution $\om$ of \eqref{eq:EL2} with the conditions 
$$\om(0) = 1, \quad \lim_{y\to \infty} \om(y) = 0.$$
This solution is postive and strictly decreasing. Moreover, we have
\begin{description}
\item (i) There is a positive constant $k(s,\be)$ such that 
$$\lim_{y\to\infty} y^s \om'(y) = -k(s,\be),$$
where
$$k(s,\be) =(1-s)\frac{\Gam\lt(\frac{1+s}{2}\rt)\Gam\lt(\frac{3-s+\sqrt{1-\be^2}}{4}\rt)^2}{\Gam\lt(\frac{3-s}{2}\rt)\Gam\lt(\frac{1+s +\sqrt{1-\be^2}}{4}\rt)^2}.$$
\item (ii) For any $y >0$,
$$\om(y) \sim (1+y^2)^{-\frac{1+s +\sqrt{1-\be^2}}4},\quad \om'(y)\sim -y^{-s}(1+y^2)^{-\frac{3-s+\sqrt{1-\be^2}}{4}},$$
and 
$$\lim_{y\to \infty} \frac{y \om'(y)}{\om(y)} = -\frac{1+s+\sqrt{1-\be^2}}{2}.$$
\item (iii) It holds true that
$$k(s,\be) = \int_0^\infty y^s(1+y^2) (\om'(y))^2 dy -\frac{s(s+2) + \be^2}4 \i0i \om(y)^2 y^s dy.$$
\item (iv) If $s\in (-1,0]$ then $y\om'(y) + s \om(y)/2 \leq 0$, and if $s\in (-1,0)$ then 
$$y\om'(y) + \frac s2 \om(y) \sim - \om(y), \quad y > 0.$$
\end{description}
\end{lemma}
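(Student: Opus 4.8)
The plan is to reduce \eqref{eq:EL2} to a hypergeometric equation and then read off everything from the asymptotic machinery of Section~2, exactly as \eqref{eq:omega} was produced. Setting $\xi=-y^2$ and $\om(y)=f(\xi)$, a direct computation (using $\om'=-2yf'$, $\om''=-2f'-4\xi f''$) turns \eqref{eq:EL2} into
\[
\xi(1-\xi)f''(\xi)+\Big(\tfrac{1+s}{2}-\tfrac{s+3}{2}\xi\Big)f'(\xi)-\tfrac{s(s+2)+\be^2}{16}f(\xi)=0,
\]
which is the hypergeometric equation \eqref{eq:hyperequation} with $c=\tfrac{1+s}{2}$ and with $a,b$ the roots of $X^2-\tfrac{1+s}{2}X+\tfrac{s(s+2)+\be^2}{16}$, namely $a=\tfrac{1+s+\sqrt{1-\be^2}}{4}$, $b=\tfrac{1+s-\sqrt{1-\be^2}}{4}$ (the discriminant collapses to $1-\be^2$ because $(1+s)^2-s(s+2)=1$). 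Since $s\in(-1,1)$ gives $c\notin\Z$, \eqref{eq:hypersolution} shows that the two real solutions of \eqref{eq:EL2} on $(0,\infty)$ are $\om_1(y)=F(a,b,c,-y^2)$ and $\om_2(y)=y^{1-s}F(a',b',2-c,-y^2)$, where $a'=a+1-c=\tfrac{3-s+\sqrt{1-\be^2}}{4}$ and $b'=b+1-c=\tfrac{3-s-\sqrt{1-\be^2}}{4}$. The two identities $c-b=a$ and $1-b=a'$, again immediate from $(1+s)^2-s(s+2)=1$, are what will produce the \emph{squares} of Gamma factors in \eqref{eq:ksbeta}.

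First I would fix the solution. Because $F(a,b,c,0)=1$ and $\om_2(y)=O(y^{1-s})\to0$, every member of the family $\om=\om_1+\lam\om_2$ satisfies $\om(0)=1$; the decay at infinity selects $\lam$. Applying the connection formula \eqref{eq:nghichdao} to $\om_1$ and $\om_2$ at $\xi=-y^2\to-\infty$ yields expansions $\om_i(y)\sim A_iy^{-2a}+B_iy^{-2b}$, and after using $c-b=a$, $1-b=a'$ one gets $B_1=\Gam(c)\Gam(a-b)/\Gam(a)^2$ and $B_2=\Gam(2-c)\Gam(a-b)/\Gam(a')^2$. Since $a>b$, the mode $y^{-2b}$ is the slowly decaying (and, when $b\le0$, growing) one, so $\lim_{y\to\infty}\om=0$ forces $B_1+\lam B_2=0$, i.e.
\[
\lam=-\frac{\Gam(c)\,\Gam(a')^2}{\Gam(2-c)\,\Gam(a)^2}=-\frac{k(s,\be)}{1-s},
\]
the last equality coming from $\Gam(\tfrac{3-s}{2})=\tfrac{1-s}{2}\Gam(\tfrac{1-s}{2})$ and the definition \eqref{eq:ksbeta}. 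The resulting $\om=\om_1+\lam\om_2$ is the desired solution, which establishes existence. That it is positive and strictly decreasing I would obtain by the maximum-principle scheme of Proposition~\ref{phiduong}(i),(iii): multiply $\om$ by $(1+y^2)^{\ga}$ with $\ga>0$ chosen so that the zeroth-order coefficient of the equation satisfied by $(1+y^2)^{\ga}\om$ becomes nonnegative, deduce $\om\ge0$ and then $\om>0$ by the strong maximum principle, and exclude any interior critical point of $\om$ by the same comparison argument, noting that $\lam<0$ forces $\om'(0^+)<0$ (computed next).

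Parts (i)--(iii) are then asymptotic read-offs. For (i), the Frobenius expansion at $y=0$ gives $\tfrac{d}{dy}\om_1=-2yF'(a,b,c,-y^2)=O(y)$ and $\tfrac{d}{dy}\om_2=(1-s)y^{-s}+o(y^{-s})$, whence $y^s\om'(y)\to(1-s)\lam=-k(s,\be)$ as $y\to0^+$ (this is precisely the flux of $\phi$ across $\{t=0\}$). For (ii), the connection formula gives $\om(y)\sim A\,y^{-2a}$ at infinity with $2a=\tfrac{1+s+\sqrt{1-\be^2}}{2}$, while $\om(0)=1$; combined with $\om>0$ and continuity this yields the two-sided bound $\om\sim(1+y^2)^{-a}$, and differentiating via \eqref{eq:daohamhyper} gives the stated bound for $\om'$ together with $y\om'/\om\to-2a$. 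For (iii), I would write \eqref{eq:EL2} in the divergence form
\[
\frac{d}{dy}\big[y^s(1+y^2)\om'(y)\big]+\tfrac{s(s+2)+\be^2}{4}\,y^s\om(y)=0,
\]
multiply by $\om$ and integrate over $(0,\infty)$; integration by parts gives the left-hand side of (iii) equal to the boundary term $[y^s(1+y^2)\om'\om]_0^\infty$, which equals $0-(1-s)\lam=k(s,\be)$ by (i) and (ii). The exponents in (ii) guarantee that both integrals converge and that the endpoint at infinity vanishes for $\be<1$ (the marginal value $\be=1$ is handled by the logarithmic formulas \eqref{eq:bang}, \eqref{eq:mbang0}). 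Finally, part (iv) follows from positivity and monotonicity: for $s\le0$ both $y\om'$ and $\tfrac s2\om$ are $\le0$, giving $y\om'+\tfrac s2\om\le0$ at once; and for $s\in(-1,0)$ one has $-(y\om'+\tfrac s2\om)=|y\om'|+\tfrac{|s|}{2}\om$, bounded below by $\tfrac{|s|}{2}\om$ and above by $C\om$, since $y\om'/\om$ is continuous on $(0,\infty)$ with finite limits $0$ at $0^+$ and $-2a$ at $\infty$ and hence bounded, which is exactly $y\om'+\tfrac s2\om\sim-\om$.

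The genuinely delicate steps are the positivity and strict monotonicity of $\om$: as with Proposition~\ref{phiduong}, the constant potential $\tfrac{s(s+2)+\be^2}{4}$ can be positive (for $s$ near $1$), so the maximum principle does not apply to $\om$ directly and the auxiliary weight $(1+y^2)^{\ga}$ must be tuned to restore a sign-definite effective potential while preserving the boundary data $1$ at $0$ and $0$ at $\infty$; identifying an admissible range of $\ga$ is where the main work lies. The remaining computations are mechanical applications of the hypergeometric identities of Section~2.
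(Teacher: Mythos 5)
Your proposal follows essentially the same route as the paper: the substitution $\xi=-y^2$ reduces \eqref{eq:EL2} to the hypergeometric equation with $a=\frac{1+s+\sqrt{1-\be^2}}4$, $b=\frac{1+s-\sqrt{1-\be^2}}4$, $c=\frac{1+s}2$, the decaying solution is selected by cancelling the $y^{-2b}$ mode via \eqref{eq:nghichdao}, positivity and monotonicity come from a maximum principle after conjugating by a power of $(1+y^2)$ (the paper uses $\om=(1+y^2)^{-b}\si(1/y)$, which is exactly your tuned weight), and (i)--(iv) are read off from the asymptotics; your divergence-form derivation of (iii) is in fact more explicit than the paper's one-line remark, and your reading of (i) as a limit at $y\to0^+$ is the correct one (the $y\to\infty$ in the statement is a typo, since by (ii) that limit is $0$; the paper's own proof and its application with $\ep\to0$ both use the limit at the origin).

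The one concrete gap is at the degenerate parameter values where \eqref{eq:nghichdao} is inapplicable. When $\be^2+s(s+2)=0$ one has $b=0$, a nonpositive integer, so the connection formula is excluded by its own hypotheses and your selection of $\lam$ via $B_1+\lam B_2=0$ is not defined; the paper treats this case separately by direct quadrature, $\om'(y)=Cy^{-s}(1+y^2)^{-1}$. Likewise for $\be=1$ one has $a=b$ and \eqref{eq:nghichdao} again fails; the decay and the value of $\lim_{y\to\infty}y^{2a}\om(y)$ must then be extracted from the logarithmic expansion \eqref{eq:logarith}, which you invoke only for part (iii) but which is also needed to justify the choice of $\lam$ and the two-sided bounds in (ii) in that case (and, as you note for (iii), the two integrals there are individually divergent when $\be=1$ and only their combination converges). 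These are easily patched, but as written the argument does not cover those parameters.
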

\begin{proof}
We divide our proof into two cases.

\emph{Case $1$:} If $\be^2 +s(s+2) \not=0$. Making the change of function by $\om(y) = \eta(-y^2)$, then $\eta$ solves the equation
$$z(z-1) \eta''(z) +\lt(\frac{s+3}2 z -\frac{1+s}2\rt)\eta'(z) + \frac{s(s+2)+\be^2}{16} \eta(z) =0, \quad z\in(-\infty,0).$$
By \eqref{eq:hyperequation} and \eqref{eq:hypersolution}, the general solution $\eta$ is given by
\begin{align*}
\eta(z)& = C_1 F\lt(\frac{1+s +\sqrt{1-\be^2}}{4},\frac{1+s -\sqrt{1-\be^2}}{4},\frac{1+s}2, z\rt)\\
&\quad + C_2 z^{\frac{1-s}2}F\lt(\frac{3-s+\sqrt{1-\be^2}}{4}, \frac{3-s-\sqrt{1-\be^2}}{4}, \frac{3-s}{2}, z\rt). 
\end{align*}
Therefore, we obtain the form of $\om$ as follow
\begin{align*}
\om(y)&= C_1 F\lt(\frac{1+s +\sqrt{1-\be^2}}{4},\frac{1+s -\sqrt{1-\be^2}}{4},\frac{1+s}2, -y^2\rt)\\
&\quad + C_2 e^{i\frac{\pi}2(1-s)}y^{1-s}F\lt(\frac{3-s+\sqrt{1-\be^2}}{4}, \frac{3-s-\sqrt{1-\be^2}}{4}, \frac{3-s}{2}, -y^2\rt). 
\end{align*}

%If $\be < 1$, from the formula $15.3.7$ in \cite{AS64} we have
%\begin{align*}
%F(a,b,c,z) &= \frac{\Gam(c) \Gam(b-a)}{\Gam(b) \Gam(c-a)} (-z)^{-a} F\lt(a, 1-c+a,1-b+a,\frac1z\rt)\\
%&\quad +\frac{\Gam(c) \Gam(a-b)}{\Gam(a) \Gam(c-b)} (-z)^{-b} F\lt(b, 1-c+b,1-a+b,\frac1z\rt)
%\end{align*}
%when $|\arg(-z)| < \pi$. 
Taking $C_1 = 1$ we obtain $\om(0) =1$. It remains to choose $C_2$ such that there exists the limits $\lim_{y\to\infty} \om(y)$. Since both of the hypergeometric functions appearing in the solution $\om$ satisfy $b < a$ and $a+b =c$. If $\beta < 1$, by using the formula \eqref{eq:nghichdao}, we obtain
\begin{align*}
\om(y)& = y^{-\frac{1+s -\sqrt{1-\be^2}}2}\left(\frac{\Gam\lt(\frac{1+s}{2}\rt)\Gam\lt(\frac{\sqrt{1-\be^2}}{2}\rt)}{\Gam\lt(\frac{1+s +\sqrt{1-\be^2}}{4}\rt)^2} + C_2 e^{i\frac{\pi}2(1-s)} \frac{\Gam\lt(\frac{3-s}{2}\rt)\Gam\lt(\frac{\sqrt{1-\be^2}}{2}\rt)}{\Gam\lt(\frac{3-s+\sqrt{1-\be^2}}{4}\rt)^2}\rt)\\
&\qquad\qquad\qquad + O(y^{-\frac{1+s +\sqrt{1-\be^2}}2}).
\end{align*}
Choosing
$$C_2 = -e^{-i\frac{\pi}2(1-s)} \frac{\Gam\lt(\frac{1+s}{2}\rt)\Gam\lt(\frac{3-s+\sqrt{1-\be^2}}{4}\rt)^2}{\Gam\lt(\frac{3-s}{2}\rt)\Gam\lt(\frac{1+s +\sqrt{1-\be^2}}{4}\rt)^2}$$
ensures that $\lim_{y\to\infty} \om(y) =0$.

If $\be =1$, by using the formula \eqref{eq:logarith} we have that $\lim_{y\to\infty} \om(y) =0$ for any choice of constant $C$. So, in this case, we choose
$$C_2 =-e^{-i\frac{\pi}2(1-s)} \frac{\Gam\lt(\frac{1+s}{2}\rt)\Gam\lt(\frac{3-s}{4}\rt)^2}{\Gam\lt(\frac{3-s}{2}\rt)\Gam\lt(\frac{1+s}{4}\rt)^2}.$$ 
Hence the function $\om$ is completely determined. Using the formula \eqref{eq:daohamhyper}, we readily verify that
$$\lim_{y\to 0}\, y^s\om'(y) = (1-s) e^{-i\frac\pi 2(1-s)} C_2 = -(1-s)\frac{\Gam\lt(\frac{1+s}{2}\rt)\Gam\lt(\frac{3-s+\sqrt{1-\be^2}}{4}\rt)^2}{\Gam\lt(\frac{3-s}{2}\rt)\Gam\lt(\frac{1+s +\sqrt{1-\be^2}}{4}\rt)^2}.$$
The part $(i)$ then is proved. Since both of the hypergeometric functions appearing in the solution $\om$ satisfy $a < b+1$ if $\beta < 1$, then after some straightforward calculations we have
\begin{equation*}
\lim_{y\to \infty} y^{\frac{1+s +\sqrt{1-\be^2}}2}\om(y) = 
\begin{cases}
\frac{\Gam\lt(\frac{1+s}{2}\rt)\Gam\lt(-\frac{\sqrt{1-\be^2}}{2}\rt)}{\Gam\lt(\frac{1+s -\sqrt{1-\be^2}}{4}\rt)^2}\lt(1-\frac{\sin^2\lt(\frac{1+s +\sqrt{1-\be^2}}{4}\pi\rt)}{\sin^2\lt(\frac{1+s -\sqrt{1-\be^2}}{4}\pi\rt)}\rt)&\mbox{if $\be <1$,}\\
2\frac{\Gam\lt(\frac{1+s}2\rt)}{\Gam\lt(\frac{1+s}4\rt)^2} \lt(\psi\lt(\frac{3-s}4\rt) -\psi\lt(\frac{1+s}4\rt)\rt)&\mbox{if $\be =1$}
\end{cases}
\end{equation*}
and
$$\lim_{y\to \infty} y^{\frac{3+s +\sqrt{1-\be^2}}2}\om'(y) = -\frac{1+s+\sqrt{1-\be^2}}2\lim_{y\to \infty} y^{\frac{1+s +\sqrt{1-\be^2}}2}\om(y).$$
Hence
$$\lim_{y\to \infty} \frac{y\om'(y)}{\om(y)} = -\frac{1+s+\sqrt{1-\be^2}}2.$$ 

Writing the function $\om$ in the form 
$$\om(y) = (1+y^2)^{-\frac{1+s-\sqrt{1-\be^2}}4} \si\lt(\frac1y\rt)$$
then $\si$ solves the equation
$$(1+y^2)^2 \si''(y) + \lt((2-s)y + \frac{1-\sqrt{1-\be^2}}{y}\rt)(1+y^2) \si'(y) - \frac{(1+s-\sqrt{1-\be^2})^2}4 \si(y) =0,$$
with the conditions $\si(0) =0$ and $\si(\infty) = 1$. A standard maximum principal argument shows that $\si$ is positive on $(0,\infty)$, hence so is $\om$. Moreover, we have seen that $\lim\limits_{y\to \infty}y^{(3+s +\sqrt{1-\be^2})/2}\om'(y) <0$, and $\lim\limits_{y\to \infty} y^s \om'(y) < 0$. These inequalities together the positivity of $\om$ imply that $\om'(y) < 0$. Indeed, suppose that there exists $y$ such that $\om'(y) =0$. If $s(s+2) + \be^2 > 0$, let $y_0$ be the smallest of such $y$'s, then $y_0 > 0$, $\om'(y_0) =0$ and $\om'(y) < 0$ for any $y\in (0,y_0)$. From the equation for $\om$ we have $\om''(y_0) < 0$, hence $\om'(y) > 0$ with $y\in (y_0-\de,y_0)$ for some $\de> 0$ small enough. This is impossible. If $s(s+2) +\be^2 < 0$, we use the same argument with $y_1$ is the largest of $y$ such that $\om'(y) =0$.

The positivity and monotonicity of $\om$ together its asymptotic behaviors yield part $(ii)$ and part $(iv)$. The part $(iii)$ is derived from part $(ii)$ and integration by parts.

{\bf Case 2:} When $\be^2 +s(s+2) =0$, in this case we have
$$(y^s(1+y^2)\om'(y))' = 0.$$
Hence $\om'(y) = Cy^{-s}(1+y^2)^{-1}$ and
$$\om(y) = 1 + C\int_0^y t^{-s} (1+t^2)^{-1} dt,$$
 for some constant $C$. Choosing
 $$C = -\lt(\int_0^\infty t^{-s} (1+t^2)^{-1} dt\rt)^{-1} = -\frac{2}{\Gam\lt(\frac{1-s}2\rt)\Gam\lt(\frac{1+s}2\rt)},$$
implies that $\lim\limits_{y\to\infty} \om(y) = 0$. All the properties $(i)-(iv)$ can be directly verified in this case.
\end{proof}

We are now ready to prove Theorem \ref{maintheorem2}. Let $u\in\CORn$, we write $u = v\phi$. For any $\ep > 0$, we have
\begin{align*}
\int_\ep^\infty\int_{\Rn} |\na u|^2 t^s dx dt & = \int_\ep^\infty\int_{\Rn} |\na v|^2 \phi^2 t^s dx dt+ \frac12 \int_\ep^\infty\int_{\Rn} \la \na v^2, \na \phi^2\ra t^s dxdt\\
&\quad + \int_\ep^\infty\int_{\Rn} v^2 |\na\phi|^2 t^s dx dt.
\end{align*}
Using the divergence theorem and the fact that 
$$L_s\phi^2 = 2\phi L_s\phi + 2|\na \phi|^2 = -\frac{\be^2}2 \frac{\phi^2}{x_n^2} + 2|\na\phi|^2$$
we have
\begin{align*}
\int_\ep^\infty\int_{\Rn} |\na u|^2 t^s dx dt & = \int_\ep^\infty\int_{\Rn} |\na v|^2 \phi^2 t^s dx dt+\frac{\be^2}4\int_\ep^\infty\int_{\Rn} \frac{u^2}{x_n^2} t^s dx dt\\
&\quad -\int_{\Rn} u(x,\ep)^2 \ep^s \frac{\pa_t\phi(x,\ep)}{\phi(x,\ep)} dx.
\end{align*}
Since $u \in \CORn$ then there is $\de > 0$ such that $u(x,t) =0$ if $x_n < \de$.  Lemma \ref{solution2} then gives us
$$\lim_{\ep \to 0}\,\ep^s \frac{\pa_t\phi(x,\ep)}{\vphi(x,\ep)} = \frac{k(s,\be)}{x_n^{1-s}},$$
uniformly on $x_n \geq \de$. Letting $\ep$ tend to $0$, we obtain
\begin{align}\label{eq:L2gradient1}
\int_0^\infty\int_{\Rn} |\na u|^2 t^s dx dt & = \frac{\be^2}4\int_0^\infty\int_{\Rn} \frac{u^2}{x_n^2} t^s dx dt + k(s,\be)\int_{\Rn} \frac{u^2}{x_n^{1-s}} dx\notag\\
&\quad\quad\quad\quad + \int_0^\infty\int_{\Rn} |\na v|^2 \phi^2 t^s dx dt.
\end{align}  
The inequality \eqref{eq:mainresult2} is an immediate consequence of \eqref{eq:L2gradient1}. 

We next verify the optimality of $k(s,\be)$. To do this, let us take a smooth function $\eta \in C_0^\infty(\R^{n-1})$ such that $0\leq \eta\leq 1$, $\eta(z) =1$ if $|z| \leq 1$, and $\eta(z) = 0$ if $|z| \geq 2$, and a the function $h\in C^\infty(\R_+)$ such that $0\leq h\leq 1$, $h(y) = 1$ if $0\leq y\leq 1$ and $h(y) =0$ if $y\geq 2$. We first consider the case $\be < 1$. For $\ep > 0$, we define the function, we write $x =(x',x_n)$ with $x'\in \R^{n-1}$,
\begin{equation}\label{eq:assympfunc}
u_\ep(x',x_n,t) = 
\begin{cases}
\eta(x') h(x_n) x_n^{-\frac s2} \om\lt(\frac t{x_n}\rt) &\mbox{if $t\geq \ep$,}\\
\eta(x') h(x_n) x_n^{-\frac s2} \om\lt(\frac \ep{x_n}\rt) &\mbox{if $0\leq t < \ep$.}
\end{cases}
\end{equation}
Let $v_\ep ={u_\ep}/\phi$. It is easy to check that 
$$\int_{\Rn} \frac{u_\ep(x)^2}{x_n^{1-s}} dx\geq \int_{\R^{n-1}} \eta^2 dx' \int_{\ep}^\infty \frac{\om(y)^2}y dy\to \infty,\quad \text{when } \ep \to 0.$$

In the estimates below, we write $C$ for a constant which does not depends on $\ep$ and can be changed from line to line. Then, we have
\begin{align}\label{eq:lonhonep}
\int_\ep^\infty \int_{\Rn} |\na v_\ep|^2 \phi^2\,t^s dxdt & \leq C \int_\ep^\infty\int_0^2\,\om\lt(\frac t{x_n}\rt)^2 x_n^{-s} t^s dx dt\\
&=C\int_0^2 x_n \int_{\frac\ep{x_n}}^{\infty} t^s \om(t)^2 dt dx_n\\
&\leq C. 
\end{align}
We next estimate
\begin{align*}
\int_0^\ep \int_{\Rn} |\na v_\ep|^2 \phi^2\, t^s dxdt.
\end{align*}
The boundedness of $\omega$ yields that
\begin{align*}
\int_0^\ep \int_{\Rn} |\na (h\eta)|^2 \om\lt(\frac \ep{x_n}\rt)^2 x_n^{-s} t^s dx dt \leq C\int_0^2 x_n^{-s} dx_n \int_0^\ep t^s dt \to 0,\quad\text{when } \ep \to 0.
\end{align*}
We also have
\begin{align*}
\int_0^\ep \int_{\Rn} (\eta h)^2 \lt(\om'\lt(\frac\ep{x_n}\rt)\rt)^2\frac{\ep^2}{x_n^4}x_n^{-s} t^s dx dt& \leq C \ep^{s+3} \int_0^2\lt(\om'\lt(\frac\ep{x_n}\rt)\rt)^2\frac{1}{x_n^4}x_n^{-s} dx_n\\
& = C\int_{\frac\ep 2}^\infty (\om'(y))^2 y^{s+2} dy\\
&\leq C.
\end{align*}
Since $y^2(\om'(y))^2 \leq C \om(y)^2$ for any $y > 0$, we can easily check that
$$\int_0^\ep \int_{\Rn}(\eta h)^2 \om\lt(\frac \ep{x_n}\rt)^2 \lt(\frac{\om'\lt(t/x_n\rt)}{\om\lt(t/x_n\rt)}\rt)^2 x_n^{-4-s} t^{2+s} dx dt \leq C.$$
Finally, since
$$\lt(\frac{\om'(y)}{\om(y)}\rt)^2 \leq C y^{-2s} (1+y^2)^{s-1},$$
then
\begin{align*}
\int_0^\ep \int_{\Rn}(\eta h)^2\om\lt(\frac \ep{x_n}\rt)^2 \lt(\frac{\om'\lt(t/x_n\rt)}{\om\lt(t/x_n\rt)}\rt)^2 x_n^{-s-2} t^s dx dt&\leq C \int_0^\ep \int_0^t \om\lt(\frac \ep{x_n}\rt)^2 x_n^{-s} t^{s-2} dx_n dt\\
&\quad + C \int_0^\ep\int_t^\infty t^{-s} x_n^{s-2}\om\lt(\frac \ep{x_n}\rt)^2 dx_n dt\\
&\leq C,
\end{align*}
here, in the last estimate, we use the fact $\om(y) \leq C y^{-(1+s+\sqrt{1-\be^2})/2}$ for $y \geq 1$. Combining the estimates above together, we obtain
$$\int_0^\ep \int_{\Rn} |\na v_\ep|^2 \phi^2\, t^s dxdt \leq C.$$
Therefore
$$\lim_{\ep \to 0} \, \frac{\int_0^\infty\int_{\Rn} |\na u_\ep|^2 t^s dx dt - \frac{\be^2}4\int_0^\infty\int_{\Rn} \frac{u_\ep^2}{x_n^2} t^s dx dt}{\int_{\Rn} \frac{u_\ep(x)^2}{x_n^{1-s}} dx} = k(s,\be).$$
That is $k(s,\be)$ is optimal.

If $\be =1$, we define for $\ep >0, \de >0$,
\begin{equation}\label{eq:assympfunc1}
u_{\ep,\de}(x',x_n,t) = 
\begin{cases}
\eta(x') h(x_n) x_n^{-\frac s2} \lt(\om\lt(\frac t{x_n}\rt)\rt)^{1+\de} &\mbox{if $t\geq \ep$,}\\
\eta(x') h(x_n) x_n^{-\frac s2} \lt(\om\lt(\frac \ep{x_n}\rt)\rt)^{1+\de} &\mbox{if $0\leq t < \ep$.}
\end{cases}
\end{equation}
Let $v_{\ep,\de} = u_{\ep, \de}/ \phi$. Using the similar estimates as above, we have
$$\int_{\Rn} \frac{u_{\ep,\de}(x)^2}{x_n^{1-s}} dx\geq \int_{\R^{n-1}} \eta^2 dx' \int_{\ep}^\infty \frac{\om(y)^{2(1+\de)}}y dy\geq -C\ln \ep,$$
$$\int_\ep^\infty\int_{\Rn} |\na v_{\ep,\de}|^2 \phi^2 t^s dx dt \leq \frac C\de -C\de \ln \ep,$$
and
$$\int_0^\ep\int_{\Rn} |\na v_{\ep,\de}|^2 \phi^2 t^s dx dt \leq \frac C\de,$$
where $C$ is a constant which does not depend on $\ep,\de$. Hence
$$\lim_{\de\to 0}\lim_{\ep\to 0}\, \frac{\int_0^\infty\int_{\Rn} |\na u_{\ep,\de}|^2 t^s dx dt - \frac{1}4\int_0^\infty\int_{\Rn} \frac{u_{\ep,\de}^2}{x_n^2} t^s dx dt}{\int_{\Rn} \frac{u_{\ep,\de}(x)^2}{x_n^{1-s}} dx} = k(s,1),$$ 
that is $k(s,1)$ is optimal.

Let us prove the inequality \eqref{eq:improvementversion2}. Its proof is completely similar with the proof of part $(iii)$ of Theorem $2$ in \cite{FMT13}. Lemma \ref{solution2} yields that
\begin{equation}\label{eq:aaaa}
t^{\frac s2} \phi^{\frac{2n+s}{n+s-1}} \sim \frac{t^{\frac s2} x_n^{\frac{2n+s}{n+s-1} \frac{1+\sqrt{1-\be^2}}2}}{(x_n^2 + t^2)^{\frac{2n+s}{n+s-1}\frac{1+s+\sqrt{1-\be^2}}4}},
\end{equation}
and
\begin{equation}\label{eq:aaaa1}
t^{\frac s2} \phi^{\frac{n+1}{n+s-1}}|\na \phi| \sim
\begin{cases}
\frac{t^{\frac s2} x_n^{\frac{2n+s}{n+s-1} \frac{1+\sqrt{1-\be^2}}2-1}}{(x_n^2 + t^2)^{\frac{2n+s}{n+s-1}\frac{1+s+\sqrt{1-\be^2}}4}}&\mbox{ if $s\in (-1,0]$,}\\
\frac{t^{-\frac s2} x_n^{\frac{2n+s}{n+s-1} \frac{1+\sqrt{1-\be^2}}2-1}}{(x_n^2 + t^2)^{\frac{2n+s}{n+s-1}\frac{1+s+\sqrt{1-\be^2}}4 -\frac s2}}&\mbox{ if $s\in (0,1)$.}
\end{cases}
\end{equation}
From \eqref{eq:L2gradient1}, it is enough to prove that
\begin{equation}\label{eq:enoughsobolev}
\int_0^\infty\int_{\Rn} \lt|\na u -u\frac{\na\phi}\phi\rt|^2t^s dx dt\geq c\lt(\int_{\R^n_+} |u(x,0)|^{\frac{2n}{n-1+s}} dx\rt)^{\frac{n-1+s}n}.
\end{equation}
Our starting point is the following Sobolev type inequalities \cite[Theorem $1$, section $2.1.6$]{Maz11}
\begin{equation}\label{eq:abba}
\int_0^\infty\int_{\Rn} t^{\frac s2} |\na u| dxdt \geq c\lt(\int_0^\infty\int_{\Rn} |u(x,t)|^{\frac{2(n+1)}{2n+s}} dxdt\rt)^{\frac{2n+s}{2(n+1)}}
\end{equation}
and
\begin{equation}\label{eq:baab}
\int_0^\infty\int_{\Rn} t^{\frac s2} |\na u| dxdt \geq c\lt(\int_{\Rn} |u(x,0)|^{\frac{2n}{2n+s}} dx\rt)^{\frac{2n+s}{2n}}
\end{equation}
for any $u\in C_0^\infty(\Rn\times \R)$. Applying these inequalities for $u = \phi^{\frac{2n+s}{n+s-1}} v$, we obtain
\begin{align}\label{eq:cddc}
\int_0^\infty\int_{\Rn} &t^{\frac s2}  \phi^{\frac{2n+s}{n+s-1}}|\na v| dxdt +\frac{2n+s}{n+s-1}\int_0^\infty\int_{\Rn} t^{\frac s2}  \phi^{\frac{n+1}{n+s-1}}|\na \phi| |v|dxdt\notag\\
&\geq c\lt(\int_0^\infty\int_{\Rn}\phi^{\frac{2n+s}{n+s-1}} |v|^{\frac{2(n+1)}{2n+s}} dxdt\rt)^{\frac{2n+s}{2(n+1)}}
\end{align}
and
\begin{align}\label{eq:dccd}
\int_0^\infty\int_{\Rn} &t^{\frac s2}  \phi^{\frac{2n+s}{n+s-1}}|\na v| dxdt +\frac{2n+s}{n+s-1}\int_0^\infty\int_{\Rn} t^{\frac s2}  \phi^{\frac{n+1}{n+s-1}}|\na \phi| |v|dxdt\notag\\
&\geq c\lt(\int_{\Rn} \phi(x,0)^{\frac{2n+s}{n+s-1}}|u(x,0)|^{\frac{2n}{2n+s}} dx\rt)^{\frac{2n+s}{2n}}.
\end{align}
Denote
\[
A =\frac s2,\quad B = \frac{2n+s}{n+s-1} \frac{1+\sqrt{1-\be^2}}2-1,\quad \Gamma = \frac{2n+s}{n+s-1}\frac{1+s+\sqrt{1-\be^2}}4.
\]
A simple computation shows that
\[
A+ B +2-2\Gamma = \frac{(2-s)(n-1)}{2(n+s-1)} >0.
\]
Hence by \cite[Lemma $5$]{FMT13} if $s\in (-1,0]$ and \cite[Lemma $11$]{FMT13} if $s\in (0,1)$, and by \eqref{eq:aaaa}, \eqref{eq:aaaa1} we obtain for any $s\in (-1,1)$
\begin{equation}\label{eq:bbbb}
\int_0^\infty\int_{\Rn} t^{\frac s2}  \phi^{\frac{2n+s}{n+s-1}}|\na v| dxdt \geq c\lt(\int_0^\infty\int_{\Rn}\phi^{\frac{2n+s}{n+s-1}} |v|^{\frac{2(n+1)}{2n+s}} dxdt\rt)^{\frac{2n+s}{2(n+1)}}
\end{equation}
and
\begin{equation}\label{eq:bbbb1}
\int_0^\infty\int_{\Rn} t^{\frac s2}  \phi^{\frac{2n+s}{n+s-1}}|\na v| dxdt \geq c\lt(\int_{\Rn} \phi(x,0)^{\frac{2n+s}{n+s-1}}|u(x,0)|^{\frac{2n}{2n+s}} dx\rt)^{\frac{2n+s}{2n}}.
\end{equation}
Setting $v = w^{\frac{2n+s}{n+s-1}}$ in \eqref{eq:bbbb} and applying Cauchy-Schwartz inequality, we arrrive
\begin{equation}\label{eq:cccc}
\int_0^\infty\int_{\Rn}|\na w|^2 \phi^2 t^s dtdx \geq c\lt(\int_0^\infty\int_{\Rn} |\phi w|^{\frac{2(n+1)}{2n+s}} dxdt\rt)^{\frac{2n+s}{2(n+1)}}.
\end{equation}
Setting $v = w^{\frac{2n+s}{n+s-1}}$ in \eqref{eq:bbbb} and applying Cauchy-Schwartz inequality and \eqref{eq:cccc}, we arrrive the inequality
\begin{equation}\label{eq:cccc1}
\int_0^\infty\int_{\Rn}|\na w|^2 \phi^2 t^s dtdx \geq c\lt(\int_{\Rn} |\phi(x,0) w(x,0)|^{\frac{2n}{n+s-1}} dx\rt)^{\frac{n+s-1}{2n}}
\end{equation}
which is equivalent to \eqref{eq:enoughsobolev}. Theorem \ref{maintheorem2} is completely proved.

%The inequality \eqref{eq:improvementversion2} is derived from the previous estimates and Sobolev type inequalities from \cite{Maz11} (see \cite{FMT13} for more details). 
We continue by proving Theorem \ref{maintheorem3}. For this purpose, let us define 
$$\phi(x,t) = d(x)^{-\frac s2} \om\lt(\frac t{d(x)}\rt),$$
where $\om$ solves the equation \eqref{eq:EL2}. A straightforward computation shows that
\begin{equation}\label{eq:xxxxx}
L_s \phi = -\frac{\be^2}4 \frac{\phi}{d(x)^2}+ \frac{-\De d} d \phi \frac{\frac s2 \om\lt(\frac t{d(x)}\rt) + \frac td \om'\lt(\frac t{d(x)}\rt)}{\om\lt(\frac t{d(x)}\rt)} =-\frac{\be^2}4 \frac{\phi}{d(x)^2} + t^{-s}\text{div}(t^s\na \phi).
\end{equation}
By the same way to \eqref{eq:L2gradient1}, we arrive
\begin{align}\label{eq:L2gradient2}
\int_0^\infty\int_{\Om} |\na u|^2 t^s dx dt & = \frac{\be^2}4\int_0^\infty\int_{\Om} \frac{u^2}{d(x)^2} t^s dx dt + k(s,\be)\int_{\Om} \frac{u(x,0)}{d(x)^{1-s}} dx\notag\\
&\quad + \int_0^\infty\int_{\Om} \lt|\na u -u\frac{\na \phi}{\phi}\rt|^2 t^s dx dt-\int_0^\infty\int_{\Om} \frac{\text{div}(t^s\phi)}{\phi} u^2 dxdt,
\end{align}
which implies \eqref{eq:mainresult3} since $-\text{div}(t^s\phi) \geq 0$ by by Lemma \ref{solution2} and the assumption $s\in (-1,0]$.

To check the optimality of $k(s,\be)$, we can assume that $x_0 =0$ by translate in the $\R^n$. If the boundary of $\Om$ is flat at $0$, that is, there is $r > 0$ such that after a change of coordinate, we have
$$B(0,r) \cap \Om = \{x_n > 0\} \cap B(0,r),\quad B(0,r)\cap \pa \Om =\{x_n=0\} \cap B(0,r).$$
In this case, we can exploit the argument in proof of Theorem \ref{maintheorem2} to verify the optimality of $k(s,\be)$. In the general case where $\pa\Om$ is not flat at $0$, we can use the same way in the proof of Theorem $1$ in \cite[page $124$]{FMT13}. 

It remains to prove \eqref{eq:improvementversion3}. By \eqref{eq:L2gradient2}, it suffices to prove
\begin{align}\label{eq:yyyy}
\int_0^\infty\int_{\Om} \lt|\na u -u\frac{\na \phi}{\phi}\rt|^2 t^s dx dt -\int_0^\infty\int_{\Om} \frac{\text{div}(t^s\phi)}{\phi} u^2 dxdt \geq c\lt(\int_\Om |u(x,0)|^{\frac{2n}{(n-1+s)}} dx\rt)^{\frac{n-1+s}n}.
\end{align}

We remark that inequalities \eqref{eq:cddc} and \eqref{eq:dccd} sill hold if we replace $\Rn$ by $\Om$. Since $s\in (-1,0)$, Lemma \ref{solution2} gives us the following estimates
$$-\phi L_s\phi - \frac{\be^2}4 \frac{\phi^2}{d^2} \sim (-\De d) d^{\sqrt{1-\be^2}} (d^2 +t^2)^{-\frac{1+s+\sqrt{1-\be^2}}2},$$
$$t^{\frac s2} \phi^{\frac{2n+s}{n+s-1}} \sim \frac{t^{\frac s2} d^{\frac{2n+s}{n+s-1} \frac{1+\sqrt{1-\be^2}}2}}{(d^2 + t^2)^{\frac{2n+s}{n+s-1}\frac{1+s+\sqrt{1-\be^2}}4}},$$
and
$$t^{\frac s2} \phi^{\frac{n+1}{n+s-1}}|\na \phi| \sim \frac{t^{\frac s2} d^{\frac{2n+s}{n+s-1} \frac{1+\sqrt{1-\be^2}}2-1}}{(d^2 + t^2)^{\frac{2n+s}{n+s-1}\frac{1+s+\sqrt{1-\be^2}}4}}.$$
Denote again
\[
A =\frac s2,\quad B = \frac{2n+s}{n+s-1} \frac{1+\sqrt{1-\be^2}}2-1,\quad \Gamma = \frac{2n+s}{n+s-1}\frac{1+s+\sqrt{1-\be^2}}4.
\]
Since $A +B+2 -2\Gamma >0$, by \cite[Lemma $7$]{FMT13} and previous asymptotic estimates, we get
\begin{align*}
\int_0^\infty \int_\Om \frac{t^{\frac s2} d^{\frac{2n+s}{n+s-1} \frac{1+\sqrt{1-\be^2}}2-1}}{(d^2 + t^2)^{\frac{2n+s}{n+s-1}\frac{1+s+\sqrt{1-\be^2}}4}} |v| dxdt &\leq C_1 \int_0^\infty \int_\Om\frac{t^{\frac s2} d^{\frac{2n+s}{n+s-1} \frac{1+\sqrt{1-\be^2}}2}}{(d^2 + t^2)^{\frac{2n+s}{n+s-1}\frac{1+s+\sqrt{1-\be^2}}4}} |\na v| dxdt\\
&\quad + C_2\int_0^\infty \int_\Om\frac{t^{\frac s2} d^{\frac{2n+s}{n+s-1} \frac{1+\sqrt{1-\be^2}}2}}{(d^2 + t^2)^{\frac{2n+s}{n+s-1}\frac{1+s+\sqrt{1-\be^2}}4}} | v| dxdt.
\end{align*}
From this inequality and \eqref{eq:cddc}, \eqref{eq:dccd} we have that
\begin{align}\label{eq:cddc1}
\int_0^\infty\int_{\Om} t^{\frac s2}  \phi^{\frac{2n+s}{n+s-1}}|\na v| dxdt +\int_0^\infty\int_{\Om} t^{\frac s2}  \phi^{\frac{2n+s}{n+s-1}} |v|dxdt \geq c\lt(\int_0^\infty\int_{\Om}\phi^{\frac{2n+s}{n+s-1}} |v|^{\frac{2(n+1)}{2n+s}} dxdt\rt)^{\frac{2n+s}{2(n+1)}}
\end{align}
and
\begin{align}\label{eq:dccd1}
\int_0^\infty\int_{\Omega} t^{\frac s2}  \phi^{\frac{2n+s}{n+s-1}}|\na v| dxdt +\int_0^\infty\int_{\Omega} t^{\frac s2}  \phi^{\frac{2n+s}{n+s-1}}|v|dxdt\geq c\lt(\int_{\Omega} \phi^{\frac{2n+s}{n+s-1}}|u|^{\frac{2n}{2n+s}} dx\rt)^{\frac{2n+s}{2n}}.
\end{align}
Taking $v= |w|^{\frac{2n+s}{n+s-1}}$ into \eqref{eq:cddc1}, and applying Cauchy-Schwartz inequality in the left-hand side, and then making simplification, we arrive 
\begin{equation}\label{eq:123321}
\int_0^\infty\int_{\Omega} \phi^2 |\na w|^2 t^s dxdt + \int_0^\infty\int_{\Omega} \phi^2 |\na w|^2 t^s dxdt \geq  c\lt(\int_0^\infty\int_{\Om}|\phi w|^{\frac{2(n+1)}{n+s-1}}  dxdt\rt)^{\frac{n+s-1}{2(n+1)}}.
\end{equation}
Since $\Om$ has a finite inner radius, using \cite[Lemma $10$]{FMT13}, we obtain 
\begin{align}\label{eq:a11}
c\int_0^\infty\int_{\Omega} \phi^2 |\na w|^2 t^s dxdt \leq \int_0^\infty\int_{\Omega} \phi^2 |\na w|^2 t^s dxdt -\int_0^\infty\int_{\Omega} \text{div}(t^s \na \phi) \phi w^2 dxdt,
\end{align}
which then implies 
\begin{equation}\label{eq:2121}
\int_0^\infty\int_{\Omega} \phi^2 |\na w|^2 t^s dxdt -\int_0^\infty\int_{\Omega} \text{div}(t^s \na \phi) \phi w^2 dxdt \geq c\lt(\int_0^\infty\int_{\Om}|\phi w|^{\frac{2(n+1)}{n+s-1}}  dxdt\rt)^{\frac{n+s-1}{2(n+1)}}.
\end{equation}
Taking $v= |w|^{\frac{2n+s}{n+s-1}}$ into \eqref{eq:dccd1}, applying Cauchy-Schwartz inequality, inequalities \eqref{eq:a11}, \eqref{eq:2121} and thereafter making simplification, we arrive the inequality
\[
\int_0^\infty\int_{\Omega} \phi^2 |\na w|^2 t^s dxdt -\int_0^\infty\int_{\Omega} \text{div}(t^s \na \phi) \phi w^2 dxdt \geq c\lt(\int_\Om |(\phi w)(x,0)|^{\frac{2n}{(n-1+s)}} dx\rt)^{\frac{n-1+s}n},
\]
which is equivalent to \eqref{eq:yyyy}. Theorem \ref{maintheorem3} is then completely proved.

In the special case where $\be =1$, $s =0$, Theorem \ref{maintheorem2} and Theorem \ref{maintheorem3} yield an improved Hardy inequality on the half spaces and on tubes,
\begin{corollary}\label{improHardy}
Given $n\geq 2$, then we have 
\begin{description}
\item (i) For any $u\in \CORn$, it holds
$$\int_\R\int_{\Rn} |\na u(x,t)|^2 dx dt \geq \frac14 \int_\R\int_{\Rn} \frac{u(x,t)^2}{x_n^2} dx dt + 4\lt(\frac{\Gam\lt(\frac34\rt)}{\Gam\lt(\frac14\rt)}\rt)^2 \int_{\Rn}\frac{u(x,0)^2}{x_n} dx.$$
\item (ii) Let $\Om$ be a domain in $\R^n$ satisfying \eqref{eq:meanconvex}. Then for any $u\in \COR$, it holds
$$\int_\R\int_{\Om} |\na u(x,t)|^2 dx dt \geq \frac14 \int_\R\int_{\Om} \frac{u(x,t)^2}{d(x)^2} dx dt + 4\lt(\frac{\Gam\lt(\frac34\rt)}{\Gam\lt(\frac14\rt)}\rt)^2 \int_{\Om}\frac{u(x,0)^2}{d(x)} dx.$$
\end{description}
\end{corollary}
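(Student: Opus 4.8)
The plan is to deduce both inequalities directly from Theorems \ref{maintheorem2} and \ref{maintheorem3} by a reflection argument, exploiting the fact that at $s=0$ the weight $t^s$ degenerates to $1$, so the energy functional becomes symmetric under the reflection $t\mapsto -t$. The only genuine point to verify is the appearance of the factor $4$ in front of the trace term, which will arise from doubling the half-space ($t>0$) contribution of Theorems \ref{maintheorem2} and \ref{maintheorem3}, where the integration in $t$ runs only over $(0,\infty)$.

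First I would record the value of the constant. Setting $s=0$ and $\be=1$ (so that $\sqrt{1-\be^2}=0$) in the formula \eqref{eq:ksbeta} gives
\[
k(0,1)=2\,\frac{\Gam\lt(\frac12\rt)}{\Gam\lt(\frac12\rt)}\lt(\frac{\Gam\lt(\frac34\rt)}{\Gam\lt(\frac14\rt)}\rt)^2=2\lt(\frac{\Gam\lt(\frac34\rt)}{\Gam\lt(\frac14\rt)}\rt)^2,
\]
so that $2k(0,1)=4(\Gam(3/4)/\Gam(1/4))^2$, which is precisely the constant claimed in the corollary.

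For part (i), I would fix $u\in\CORn$ and apply \eqref{eq:mainresult2} with $s=0$, $\be=1$ to obtain the half-space estimate over $\{t>0\}$, namely
\[
\int_0^\infty\!\!\int_{\Rn}|\na u|^2\,dxdt\ge\frac14\int_0^\infty\!\!\int_{\Rn}\frac{u^2}{x_n^2}\,dxdt+k(0,1)\int_{\Rn}\frac{u(x,0)^2}{x_n}\,dx.
\]
Then I would apply the very same inequality to the reflected test function $w(x,t):=u(x,-t)$, which again lies in $\CORn$ and satisfies $w(x,0)=u(x,0)$; after the change of variables $t\mapsto -t$ (under which $|\na w(x,t)|^2=|\na u(x,-t)|^2$ and $w(x,t)^2=u(x,-t)^2$), this reproduces the corresponding estimate over $\{t<0\}$. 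Adding the two inequalities, the bulk terms combine into the full integrals $\int_\R\int_{\Rn}$, while the two trace terms (both evaluated at $t=0$) add to give $2k(0,1)\int_{\Rn}u(x,0)^2/x_n\,dx$, which yields (i).

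Part (ii) follows verbatim with $\Rn$ replaced by $\Om$ and $x_n$ replaced by $d(x)$: since $s=0\in(-1,0]$, $\be=1\in[0,1]$, and $\Om$ satisfies \eqref{eq:meanconvex}, the hypotheses of Theorem \ref{maintheorem3} hold, so \eqref{eq:mainresult3} applies, and the identical reflection-and-add procedure produces the stated constant. I do not expect any serious obstacle here; the entire content of the argument is the bookkeeping observation that two-sided integration in $t$ doubles the trace term relative to the one-sided Theorems \ref{maintheorem2} and \ref{maintheorem3}, and that the degenerate weight at $s=0$ makes the reflection $t\mapsto-t$ an isometry of the functional.
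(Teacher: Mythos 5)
Your argument is correct and is essentially the (implicit) proof in the paper, which simply asserts that the corollary follows from Theorems \ref{maintheorem2} and \ref{maintheorem3} with $s=0$, $\beta=1$; the reflection $t\mapsto -t$ and the resulting doubling $2k(0,1)=4(\Gamma(3/4)/\Gamma(1/4))^2$ is exactly the bookkeeping needed to pass from the one-sided integrals of those theorems to the two-sided integrals in the corollary, and you have verified it correctly.
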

We emphasize that the inequalities in Corollary \ref{improHardy} are an improved version of Hardy inequality on the half spaces and on the tubes with the remainder term concerning to the trace of function on the hyperplanes. This improved version seems to be new. We refer reader to \cite{FTT09} for many other improved Hardy inequalities on the half spaces.

Let $\Om$ be a bounded domain of $\R^n$, and let $\lam_i$ and $\phi_i$ be the Dirichlet eigenvalues and orthonormal eigenfunctions of the Laplacian, that is, $-\De \phi_i = \lam_i \phi_i$ in $\Om$ with $\phi_i =0$ on $\pa\Om$. Then the spectral fractional Laplacian on $\Om$ is defined for any $f=\sum c_i\phi_i$ by
$$(-\De)^{\al}f(x) = \sum_{i=1}^\infty c_i \lam_i^\al \phi_i(x),\quad \al \in (0,1).$$
It is shown in \cite[Appendix $8.1$]{FMT13} that if 
\begin{equation}\label{eq:EXTEND}
u(x,t) = \sum_{i=1}^\infty c_i \phi_i(x) T(\sqrt{\lam_i} t),
\end{equation}
where 
$$T(t) = \frac{2^{1-\al}}{\Gam(\al)} t^\al K_\al(t),$$
with $K_\al$ is the modified Bessel function of the second kind, then it holds that $u(x,0) = f(x)$, $u(x,y) =0$ in $\pa\Om\times (0,\infty)$, $\d(y^{1-2\al}\na u) =0$ in $\COR$, and
\begin{equation}\label{eq:fractionalLap}
\int_0^\infty \int_\Om |\na u|^2 t^{1-2\al} dx dt = \frac{2^{1-2\al}\Gam(1-\al)}{\Gam(\al)} \la (-\De)^\al f, f\ra.
\end{equation}
Moreover, $u$ minimizes the energy
$$\i0i\int_\Om |\na v|^2 t^{1-2\al} dx dt,$$
over all functions $v$ such that $v(x,0) = f(x)$ and $v(x,y) =0$ in $\pa\Om\times (0,\infty)$.
 
As a consequence of Theorem \ref{maintheorem3}, we have the following Hardy type inequality and Hardy-Sobolev-Maz'ya type inequality for spectral fractional Laplacian.
\begin{corollary}
Let $n\ge 2$, $1/2 \leq \al < 1$, $\be \in [0,1]$, and let $\Om\subset \R^n$ be a bounded domain and satisfy the condition \eqref{eq:meanconvex}. Then
\begin{description}
\item (i) For any $f\in C_0^\infty(\Om)$ we have
\begin{align*}
\la (-\De)^\al f, f\ra \geq \frac{\be^2}4\frac{2^{2\al-1}\Gam(\al)}{\Gam(1-\al)}\int_0^\infty \int_\Om \frac{u(x,t)^2}{d(x)^2} dx dt + 2^{2\al}\frac{\Gam\lt(\frac{2(1+\al)+\sqrt{1-\be^2}}4\rt)^2}{\Gam\lt(\frac{2(1-\al)+\sqrt{1-\be^2}}4\rt)^2} \int_\Om \frac{f(x)^2}{d(x)^{2\al}} dx. 
\end{align*}
\item (ii) Suppose that there exist $x_0\in \pa \Om$ and $r > 0$ such that the boundary $\pa \Om \cap B(x_0,r)$ is $C^1-$regular, then the constant $2^{2\al} \lt(\frac{\Gam((2(1+\al) +\sqrt{1-\be^2})/4)}{\Gam((2(1-\al) +\sqrt{1-\be^2})/4)}\rt)^2$ is optimal. 
\item (iii) If $\Om$ is Lipschitz and $\al \in (1/2,1)$, then there exists a constant $c>0$ such that
\begin{align*}
\la (-\De)^\al f, f\ra &\geq \frac{\be^2}4\frac{2^{2\al-1}\Gam(\al)}{\Gam(1-\al)}\int_0^\infty \int_\Om \frac{u^2}{d^2} dx dt + 2^{2\al}\frac{\Gam\lt(\frac{2(1+\al)+\sqrt{1-\be^2}}4\rt)^2}{\Gam\lt(\frac{2(1-\al)+\sqrt{1-\be^2}}4\rt)^2} \int_\Om \frac{f^2}{d^{2\al}} dx\\
&\quad\quad\quad\quad + c\lt(\int_\Om |f|^{\frac{2n}{n-2\al}} dx\rt)^{\frac{n-2\al}n}.
\end{align*}
\end{description}
\end{corollary}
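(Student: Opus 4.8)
The plan is to derive all three parts from Theorem \ref{maintheorem3} by specializing the weight to $s = 1-2\al$. Since $\al\in[1/2,1)$ gives $s = 1-2\al\in(-1,0]$, this exponent lies in the admissible range of Theorem \ref{maintheorem3}, and the mean-convexity hypothesis \eqref{eq:meanconvex} is precisely what that theorem requires. First I would let $u$ be the Caffarelli--Silvestre type extension of $f$ given by \eqref{eq:EXTEND}; by the facts recalled before \eqref{eq:fractionalLap}, it satisfies $u(\cdot,0)=f$, vanishes on $\pa\Om\times(0,\infty)$, and obeys the energy identity
\[
\int_0^\infty\int_\Om |\na u|^2 t^{1-2\al}\,dx\,dt = \frac{2^{1-2\al}\Gam(1-\al)}{\Gam(\al)}\,\la (-\De)^\al f, f\ra.
\]
Thus it suffices to estimate the left-hand side from below and then clear the constant factor.

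Applying inequality \eqref{eq:mainresult3} with $s=1-2\al$ to $u$, and using $u(\cdot,0)=f$ together with $1-s=2\al$, the trace term becomes $k(1-2\al,\be)\int_\Om f^2 d^{-2\al}\,dx$ and the bulk term becomes $\tfrac{\be^2}{4}\int_0^\infty\int_\Om u^2 d^{-2} t^{1-2\al}\,dx\,dt$. I would then simplify the constant \eqref{eq:ksbeta} under $s=1-2\al$ via the identities
\[
\tfrac{1+s}2 = 1-\al,\quad \tfrac{1-s}2 = \al,\quad \tfrac{3-s+\sqrt{1-\be^2}}4 = \tfrac{2(1+\al)+\sqrt{1-\be^2}}4,\quad \tfrac{1+s+\sqrt{1-\be^2}}4 = \tfrac{2(1-\al)+\sqrt{1-\be^2}}4,
\]
which give $\frac{2^{2\al-1}\Gam(\al)}{\Gam(1-\al)}\,k(1-2\al,\be) = 2^{2\al}\big(\Gam(\tfrac{2(1+\al)+\sqrt{1-\be^2}}4)/\Gam(\tfrac{2(1-\al)+\sqrt{1-\be^2}}4)\big)^2$. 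Multiplying the inequality through by $\frac{2^{2\al-1}\Gam(\al)}{\Gam(1-\al)}$ and invoking the energy identity turns the left-hand side into $\la(-\De)^\al f,f\ra$, yielding part (i). For part (ii), the optimality of $k(s,\be)$ asserted in Theorem \ref{maintheorem3} under the $C^1$-regular boundary-patch assumption transfers verbatim, since the constant here differs from $k(1-2\al,\be)$ only by the fixed positive factor $\frac{2^{2\al-1}\Gam(\al)}{\Gam(1-\al)}$. For part (iii), the hypothesis $\al\in(1/2,1)$ forces $s\in(-1,0)$, and a bounded Lipschitz domain automatically has finite inner radius, so the improved inequality \eqref{eq:improvementversion3} applies; its Sobolev exponent $\tfrac{2n}{n-1+s}$ becomes $\tfrac{2n}{n-2\al}$, and absorbing the same factor into the constant $c$ produces the stated remainder term.

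The main obstacle I anticipate is that Theorem \ref{maintheorem3} is stated for $u\in C_0^\infty(\Om\times\R)$, whereas the extension $u$ in \eqref{eq:EXTEND} is built from the profile $T(t)=\tfrac{2^{1-\al}}{\Gam(\al)}t^\al K_\al(t)$ and the eigenfunctions $\phi_i$, so it is neither compactly supported in $t$ nor compactly supported inside $\Om$ at positive height. I would resolve this by a density argument: for $f\in C_0^\infty(\Om)$ the modified Bessel profile $T$ decays exponentially as $t\to\infty$, so $u$ has finite weighted Dirichlet energy, and $u$ vanishes on the lateral boundary $\pa\Om\times(0,\infty)$ because each $\phi_i$ vanishes on $\pa\Om$. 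Hence $u$ can be approximated in the weighted energy norm by truncating for large $t$ and mollifying near $\pa\Om$ while preserving the lateral vanishing, producing admissible competitors for \eqref{eq:mainresult3} and \eqref{eq:improvementversion3}; passing to the limit preserves both sides, after which the energy identity \eqref{eq:fractionalLap} and the constant simplification complete the proof.
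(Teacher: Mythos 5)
Your treatment of parts (i) and (iii) follows the paper's route exactly: specialize Theorem \ref{maintheorem3} to $s=1-2\al$, invoke the energy identity \eqref{eq:fractionalLap}, and simplify $k(1-2\al,\be)$. Your Gamma-function identities are correct, and your density argument for applying \eqref{eq:mainresult3} and \eqref{eq:improvementversion3} to the non-compactly-supported extension \eqref{eq:EXTEND} addresses a point the paper passes over in silence (it says the parts are ``immediately derived''). Incidentally, what you actually obtain is the bulk term carrying the weight $t^{1-2\al}$, which is what the theorem produces; the unweighted integral in the printed statement appears to be a typo, so no fault of yours.

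Part (ii), however, is not justified as written. The optimality of $k(s,\be)$ in Theorem \ref{maintheorem3} does not ``transfer verbatim'' merely because the two constants differ by the fixed factor $\frac{2^{2\al-1}\Gam(\al)}{\Gam(1-\al)}$: the functional changes. The theorem's sharpness statement concerns the ratio built from $\i0i\int_\Om|\na v|^2t^s\,dx\,dt$ over arbitrary $v\in\COR$, whereas the corollary's left-hand side is $\la(-\De)^\al f,f\ra$, which (up to the constant) is the \emph{minimum} of that weighted Dirichlet energy over all admissible extensions of $f$, and the Hardy bulk term is evaluated at the minimizing extension $u$ rather than at a free test function. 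A priori the infimum of the corollary's ratio over this restricted class could exceed $\frac{2^{2\al-1}\Gam(\al)}{\Gam(1-\al)}k(s,\be)$. The missing ingredient---and exactly how the paper closes the argument---is the variational characterization of the extension: for a near-optimizer $v$ of Theorem \ref{maintheorem3} with trace $f=v(\cdot,0)$ one has $\la(-\De)^\al f,f\ra\le \frac{2^{2\al-1}\Gam(\al)}{\Gam(1-\al)}\i0i\int_\Om|\na v|^2t^s\,dx\,dt$, which bounds the corollary's ratio by the theorem's (one must still account for replacing $\int u^2d^{-2}t^s$ by $\int v^2d^{-2}t^s$ in the numerator, a point on which the paper itself is terse). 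Your proposal omits the minimality step entirely, so part (ii) stands unproved as written.
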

\begin{proof}
The part $(i)$ and $(iii)$ are immediately derived from Theorem \ref{maintheorem3} and \eqref{eq:fractionalLap} with $s = 1-2\al$. For the part $(ii)$, we argue as follows. Given $\ep > 0$, choose a function $v\in \COR$ such that 
$$k(s,\be) + \ep > \frac{\i0i\int_\Om |\na v|^2 t^s dx dt -\frac{\be^2}4\i0i\int_\Om \frac{u^2}{d(x)^2} t^s dx dt}{\int_\Om \frac{u(x,0)^2}{d(x)^{1-s}} dx}.$$
Let $f(x) = v(x,0)$ and extend it to $\Om\times (0,\infty)$ by \eqref{eq:EXTEND}, then we have
$$\la (-\De)^\al f, f\ra \leq 2^{2\al-1} \frac{\Gam(\al)}{\Gam(1-\al)} \i0i\int_\Om |\na v|^2 t^s dx dt.$$
Combining two inequalities above we get the part $(ii)$.
\end{proof}

\section{Proof of Theorem \ref{logHS} and Theorem \ref{Radialcase}}
In this section, we prove Theorem \ref{logHS} and Theorem \ref{Radialcase}. As we see below, Theorem \ref{logHS} is a simple consequence of \eqref{eq:traceSobolev}, \eqref{eq:traceHardy}, and H\"older inequality.

\emph{Proof of Theorem \ref{logHS}:} We first prove the logarithmic Sobolev trace inequality. Let $u \in \dot{W}(d_{\hs},\hs)$ such that $\int_{\sp}u(x,0)^2 dx =1$. For any $2 < p< 2(s)^*$, it implies from H\"older inequality and the weighted trace Sobolev inequality \eqref{eq:traceSobolev} that  
\begin{align*}
\int_{\sp} |u(x,0)|^p dx &\leq \lt(\int_{\sp} |u(x,0)|^{2(s)^*} dx\rt)^{\frac{p-2}{2(s)^*-2}}\\
&\leq \lt(C_{n,s} \int_{\hs} |\na u(x,t)|^2 t^s dxdt\rt)^{\frac{2(s)^*(p-2)}{2(2(s)^*-2)}}.
\end{align*}
Taking logarithmic and deviding both sides by $p-2$, we obtain
$$\frac1{p-2}\ln\lt(\int_{\sp} |u(x,0)|^p dx\rt)\leq \frac{n}{2(1-s)}\ln\lt(C_{n,s} \int_{\hs} |\na u(x,t)|^2 t^s dxdt\rt).$$
Let $p$ tend to $2$ we obtain the logarithmic Sobolev trace inequality \eqref{eq:logSobtrace} with $C_1 =C_{n,s}$.

The logarithmic Hardy trace inequality \eqref{eq:logHardytrace} is proved by the same way. For any $2 < p< 2(s)^*$, we denote
$$\ga_p = \frac{1-s}{p}-\frac{(p-2)(n+s-1)}{2p}.$$
Using H\"older inequality, we have
$$\int_{\sp}\frac{|u(x,0)|^p}{|x|^{\ga_p p}} dx \leq \lt(\int_{\sp} |u(x,0)|^{2(s)^*} dx\rt)^{\frac{p-2}{2(s)^*-2}}.$$
Applying the weighted trace Sobolev inequality, and using the same argument in the proof of the logarithmic Sobolev trace inequality, we obtain the logarithmic Hardy trace inequality \eqref{eq:logHardytrace}. The proof of Theorem \ref{logHS} hence is completed.

{\bf Proof of Theorem \ref{Radialcase}:} Since $u$ is radial, then $u(x,0)$ is also radial on $\sp$. Suppose that $u(x,t) =v(|(x,t)|)$ with a function $v: \R_+ \to \R$. Using the spherical coordinate, we have
$$\int_{\hs} |\na u|^2 t^s dx dt =\om_{n,s} \int_0^\infty |v'(r)|^2 r^{n_s-1} dr,$$
with
$$\om_{n,s} =\int_{\{|(x,t)| =1, t\geq 0\}}t^s d\mH^n =\frac{\pi^{\frac n2} \Gam\lt(\frac{1+s}{2}\rt)}{\Gam\lt(\frac{n_s}{2}\rt)},$$
where $\mH^n$ is $n-$dimensional Hausdorff measure on the unit sphere $S^n$.

We calculate $C_{LS,r}$ first. It is evident that
$$\int_{\sp} u(x,0)^2 \ln u(x,0)^2 dx = \om_{n-1} \int_0^\infty v(r)^2 \ln\lt( v(r)^2 \rt) r^{n-1} dr,$$
with $\om_{n-1} = 2\pi^{n/2}/\Gam\lt(n/2\rt)$ is the surface area of the unit sphere $S^{n-1}$. Hence
$$C_{LS,r}(n,s) =\sup\lt\{\frac{\exp\lt(\frac{1-s}{n}\om_{n-1}\int_0^\infty v(r)^2\ln (v(r)^2) r^{n-1}dr\rt)}{\om_{n,s}\int_0^\infty (v'(r)^2) r^{n_s-1}dr}\, :\, \om_{n-1} \int_0^\infty v(r)^2 r^{n-1} dr =1\rt\}.$$
After changing the variable $r$ by $r^{2/(1-s)}$, we arrive
\begin{equation}\label{eq:sup}
C_{LS,r}(n,s) =\frac{4\om_{n-1}}{(1-s)^2\om_{n,s}} \lt(\frac{2\om_{n-1}}{1-s}\rt)^{-\frac{1-s}{n}}\sup\,\frac{\exp\lt(\frac{1-s}{n}\int_0^\infty v(r)^2\ln (v(r)^2) r^{\frac{2n}{1-s}-1}dr\rt)}{\int_0^\infty (v'(r)^2) r^{\frac{2n}{1-s}-1}dr},
\end{equation}
where the supremum is taken on set of functions $v$ such that 
$$\int_0^\infty v(r)^2 r^{\frac{2n}{1-s}-1} dr =1.$$
We next compute the supremum in \eqref{eq:sup}. It is given in the following lemma.
\begin{lemma}\label{logsobfracdimen}
Given $a \geq 1$. For any function $u$ on $\R_+$ such that $\int_{\R+} (u'(r))^2 r^{a-1} dr < \infty$ and $\int_{\R_+} u(r)^2 r^{a-1}dr =1$ then
\begin{equation}\label{eq:dpcm}
\int_{\R^+}u(r)^2 \log(u(r)^2)\, r^{a-1} dr \leq \frac{a}2 \ln\lt(\frac{2}{ae}\lt(\frac{2}{\Gam\lt(\frac a2\rt)}\rt)^{\frac2a}\int_{\R_+} (u'(r))^2 r^{a-1} dr\rt).
\end{equation}
The equality holds if 
\begin{equation}\label{eq:extremal}
u(r) = \lam^{\frac a2} \lt(\frac{2}{\Gam\lt(\frac a2\rt)}\rt)^{\frac 12}\, e^{-\frac{\lam^2 r^2}{2}},
\end{equation}
for some $\lam >0$.
\end{lemma}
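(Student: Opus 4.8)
\emph{Proof proposal.} The plan is to read \eqref{eq:dpcm} as the sharp Euclidean logarithmic Sobolev inequality in the (possibly non-integer) ``dimension'' $a$, written for the radial profile with the weight $r^{a-1}$, and to prove it by comparison against the Gaussian $g_\lambda$ that is conjectured to be extremal. First I would reduce to $u\geq 0$ (replacing $u$ by $|u|$ changes neither side and does not increase the weighted energy), and assume $u$ smooth with enough decay at infinity, so that the boundary terms below vanish; the terms at $r=0$ will vanish automatically because $g_\lambda'(0)=0$ and the weight $r^{a-1}$ is integrable there for $a\geq 1$. For $\lambda>0$ put $g_\lambda(r)=\lambda^{a/2}(2/\Gamma(a/2))^{1/2}e^{-\lambda^2r^2/2}$; using $\int_0^\infty e^{-\lambda^2r^2}r^{b-1}\,dr=\Gamma(b/2)/(2\lambda^b)$ one checks that $d\nu_\lambda:=g_\lambda^2\,r^{a-1}\,dr$ is a probability measure on $(0,\infty)$, which is exactly the normalization built into the extremal profile \eqref{eq:extremal}.

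The key step is to factor $u=g_\lambda\,\psi$, so that $\psi^2=u^2/g_\lambda^2$ satisfies $\int_0^\infty \psi^2\,d\nu_\lambda=\int_0^\infty u^2 r^{a-1}\,dr=1$. Expanding $(u')^2=(g_\lambda'\psi+g_\lambda\psi')^2$ and integrating the cross term by parts gives the identity
\begin{equation*}
\int_0^\infty (u')^2 r^{a-1}\,dr=-\int_0^\infty\Big(g_\lambda''+\tfrac{a-1}{r}g_\lambda'\Big)g_\lambda\,\psi^2\, r^{a-1}\,dr+\int_0^\infty (\psi')^2\,d\nu_\lambda .
\end{equation*}
Since $g_\lambda''+\tfrac{a-1}{r}g_\lambda'=(\lambda^4r^2-a\lambda^2)g_\lambda$, the first term on the right equals $a\lambda^2-\lambda^4\int_0^\infty r^2u^2r^{a-1}\,dr$. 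On the other hand, writing $d\nu_\lambda=e^{-V}\,dr$ with $V(r)=\lambda^2r^2-(a-1)\log r$ up to a constant, one has $V''(r)=2\lambda^2+(a-1)/r^2\geq 2\lambda^2$ precisely because $a\geq 1$; hence the Bakry--\'Emery criterion delivers the logarithmic Sobolev inequality $\mathrm{Ent}_{\nu_\lambda}(\psi^2)\leq \lambda^{-2}\int_0^\infty(\psi')^2\,d\nu_\lambda$ for the measure $\nu_\lambda$.

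Now I would assemble the pieces. Since $\int\psi^2 d\nu_\lambda=1$, a direct computation gives $\mathrm{Ent}_{\nu_\lambda}(\psi^2)=\int_0^\infty u^2\log u^2\,r^{a-1}\,dr-\big(a\log\lambda+\log\tfrac{2}{\Gamma(a/2)}\big)+\lambda^2\int_0^\infty r^2u^2r^{a-1}\,dr$. Substituting the energy identity into the Bakry--\'Emery inequality, the two occurrences of $\lambda^2\int r^2u^2 r^{a-1}dr$ cancel, leaving the one-parameter family
\begin{equation*}
\int_0^\infty u^2\log u^2\,r^{a-1}\,dr\leq \frac{1}{\lambda^2}\int_0^\infty (u')^2 r^{a-1}\,dr-a+a\log\lambda+\log\frac{2}{\Gamma(a/2)},\qquad \lambda>0 .
\end{equation*}
Optimizing the right-hand side over $\lambda$ gives the critical value $\lambda^2=\tfrac{2}{a}\int_0^\infty(u')^2r^{a-1}\,dr$, at which $\lambda^{-2}\int (u')^2 r^{a-1}dr=a/2$, and inserting this reproduces exactly the constant in \eqref{eq:dpcm}. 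Equality is attained by $u=g_\lambda$: then $\psi\equiv 1$ makes both sides of the Bakry--\'Emery inequality vanish, and since $\int_0^\infty (g_\lambda')^2 r^{a-1}\,dr=\tfrac{a}{2}\lambda^2$, this very $\lambda$ is the optimizer, recovering \eqref{eq:extremal}.

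The main obstacle is justifying the logarithmic Sobolev inequality for the weighted measure $\nu_\lambda$ on the half-line. One must run the $\Gamma_2$-calculus for the diffusion with generator $L=\partial_r^2-V'\partial_r$, verifying $\Gamma_2(\psi)=(\psi'')^2+V''(\psi')^2\geq 2\lambda^2(\psi')^2$ and checking that the Neumann boundary at $r=0$ produces no extra contribution; this is exactly where the hypothesis $a\geq 1$, i.e.\ the uniform convexity $V''\geq 2\lambda^2$, is needed (for $a<1$ the bound $V''\geq 2\lambda^2$ fails near the origin). If one prefers to avoid invoking Bakry--\'Emery as a black box, the same estimate can be obtained directly by integrating the pointwise $\Gamma_2$ inequality along the semigroup $e^{tL}$, which yields $\mathrm{Ent}_{\nu_\lambda}(\psi^2)\leq\lambda^{-2}\int(\psi')^2\,d\nu_\lambda$ by the standard interpolation identity.
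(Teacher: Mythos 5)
Your proposal is correct, and it reaches the sharp constant by a genuinely different route from the paper. The paper proves the lemma by mass transport in the spirit of Cordero-Erausquin \cite{DC02}: it constructs the monotone map $T$ on $(0,\infty)$ pushing $u^2r^{a-1}dr$ onto the Gaussian reference density $u_0^2r^{a-1}dr$, takes logarithms of the Monge--Amp\`ere relation $u^2=u_0(T)^2(T/r)^{a-1}T'$, applies the arithmetic--geometric mean inequality and $\ln(1+x)\le x$ (this is where $a\ge 1$ enters, as the coefficient $a-1\ge 0$), integrates by parts, and finally optimizes over the scaling $u_\ep(r)=\ep^{a/2}u(\ep r)$ --- exactly the same one-parameter optimization you perform over $\lambda$. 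Your route instead factors $u=g_\lambda\psi$ and reduces the statement to the logarithmic Sobolev inequality for the reference measure $\nu_\lambda=g_\lambda^2r^{a-1}dr=e^{-V}dr$ with $V''=2\lambda^2+(a-1)/r^2\ge 2\lambda^2$; your algebra (the energy identity, the identity $g_\lambda''+\frac{a-1}{r}g_\lambda'=(\lambda^4r^2-a\lambda^2)g_\lambda$, the cancellation of the second-moment terms, and the optimization $\lambda^2=\frac2a\int_0^\infty (u')^2r^{a-1}dr$) all checks out and reproduces the constant in \eqref{eq:dpcm} and the extremal \eqref{eq:extremal}. The one point you correctly flag as the crux is the validity of the Bakry--\'Emery criterion for $e^{-V}dr$ on the half-line: since $(0,\infty)$ is convex and $V''\ge 2\lambda^2$ there, the LSI with constant $1/\lambda^2$ for $\mathrm{Ent}_{\nu_\lambda}(\psi^2)$ is indeed standard (Bakry--\'Emery on a convex domain with Neumann boundary, or alternatively the Bobkov--Ledoux Pr\'ekopa--Leindler argument), but it is an external input of a different nature than anything the paper uses; note that the even extension of $V$ to $\R$ is \emph{not} uniformly convex across the origin, so the restriction to the convex half-line is essential and should be stated explicitly rather than reduced to the whole-line criterion. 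Minor technical points common to both proofs (reduction to $u\ge 0$, approximation so that $\int_0^\infty r^2u^2r^{a-1}dr<\infty$ and the boundary terms $\lambda^2 u^2 r^{a}\to 0$ at $0$ and $\infty$ vanish) are handled the same way the paper handles its own approximation step. What your approach buys is a clean conceptual identification of \eqref{eq:dpcm} as a ground-state transform of a uniformly convex LSI, at the cost of importing semigroup/convexity machinery; the paper's transport proof is more self-contained and elementary, using only one-dimensional calculus.
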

\begin{proof}
We follow the argument in \cite{DC02} which is based on the mass transport method to prove this lemma. Approximating $u$ if necessary, we can assume that $u$ is strictly positive and continuous. Consider the function
$$u_0(r) = \frac1{c_a} e^{-\frac{r^2}{2}},$$
where $c_a = \lt(\Gam\lt(a/2\rt)/{2}\rt)^{\frac 12}$ is normalized constant such that $\int_0^\infty u_0^2 r^{a-1} dr =1$. For each $r >0$, define $T(r)$ is the unique number such that
$$\int_0^r u(t)^2 t^{a-1} dt = \int_0^{T(r)} u_0(t)^2 t^{a-1} dt.$$
The function $T$ is strictly increasing, $T(0) =0$, $\lim\limits_{r\to\infty} T(r) =\infty$, and satisfies the equation
$$u(r)^2 = u_0(T(r))^2 \frac{T(r)^{a-1}}{r^{a-1}} T'(r).$$
Taking the logarithmic both sides and using the Geometric-Arithmetic mean inequality and the inequality $\ln(1+x) \leq x$, we have (note that $T'(r) >0$)
$$\ln(u(r)^2 \leq \ln (u_0(T(r))^2) + T'(r) + (a-1) \frac{T(r)}{r} -a.$$
Multiplying both sides with $u^2(r) r^{a-1}$, integrating on $\R_+$, and using the integration by parts, we obtain
\begin{align*}
\int_0^\infty u(r)^2 \ln(u(r)^2) r^{n-1} dr &\leq \int_0^\infty u(r)^2 r^{a-1}\ln (u_0(T(r))^2) dr\\ 
&\quad -2\int_0^\infty u'(r) u(r) T(r) r^{a-1} dr -a\\
&=\int_0^\infty u_0(T(r))^2 T(r)^{a-1} T'(r)\ln (u_0(T(r))^2)\\
&\quad -2\int_0^\infty u'(r) u(r) T(r) r^{a-1} dr -a\\
&\leq \int_0^\infty u_0(r)^2 r^{a-1} \ln(u_0(r)^2) dr -a\\&\quad \int_0^\infty u'(r)^2 r^{a-1}dr + \int_0^\infty u(r)^2 r^{a-1} T(r)^2 dr\\
&=\int_0^\infty u'(r)^2 r^{a-1}dr -(\ln c_a^2 +a). 
\end{align*}
Changing function $u$ by $u_\ep(r) =\ep^{{a}/{2}} u(\ep r)$, we have
$$\int_0^\infty u(r)^2 \ln(u(r)^2) r^{n-1} dr \leq -a\ln \ep +\ep^2\int_0^\infty u'(r)^2 r^{a-1}dr -(\ln c_a^2 +a).$$
Optimizing the right hand side over $\ep >0$, we get \eqref{eq:dpcm}. Finally, an easy computation that the equality in \eqref{eq:dpcm} holds if $u$ is given by \eqref{eq:extremal}.
\end{proof}
Combining \eqref{eq:sup} and \eqref{eq:dpcm} we obtain the value of $C_{LS,r}(n,s)$ as in \eqref{eq:CLSr}, and the extremal functions are given by
$$u(x,t) = \lam^{\frac n2} \lt(\frac{\Gam\lt(\frac n{1-s}\rt)\om_{n-1}}{1-s}\rt)^{\frac12} \exp\lt(-\frac{|\lam(x,t)|^{1-s}}{2}\rt),$$
for some $\lam > 0$.

By the same argument, we arrive
\begin{equation}\label{eq:sup1}
C_{LH,r}(n,s) =\frac{4\om_{n-1}}{(1-s)^2\om_{n,s}} \lt(\frac{2\om_{n-1}}{1-s}\rt)^{-\frac{1-s}{n}}\sup\,\frac{\exp\lt(\frac{1-s}{n}\int_0^\infty \frac{v(r)^2}{r^2}\ln \lt(\frac{v(r)^2}{r^{2 -\frac{2n}{1-s}}}\rt) r^{\frac{2n}{1-s}-1}dr\rt)}{\int (v'(r)^2) r^{\frac{2n}{1-s}-1}dr},
\end{equation}
where the supremum is taken on set of functions $v$ such that 
$$\int_{\R_+}\frac{v(r)^2}{r^2} r^{\frac{2n}{1-s}-1} dr =1.$$
It follows from Theorem $\mathrm{\bf B'}$ in \cite{DDFT10} that
\begin{lemma}\label{fLHS}
Let $n\geq 2$ and $-1 < a <(n-2)/{2}$. If $\int_0^\infty {u(r)}^2 r^{\frac{n}{1+a}-3} dr =1$, then
\begin{equation}\label{eq:fLHS}
\int_0^\infty \frac{u(r)^2}{r^2} \ln\lt(\frac{u(r)^2}{r^{2-\frac{n}{1+a}}}\rt) r^{\frac{n}{1+a}-1} dr \leq \frac{n}{2(1+a)}\ln\lt(C(n,a) \int_0^\infty (u'(r))^2 r^{\frac{n}{1+a}-1} dr\rt),
\end{equation}
with
$$C(n,a) =\frac{4(1+a)^2}{n} \lt(\frac{1}{2\pi e(1+a)}\rt)^{\frac{1+a}{n}} \lt(\frac{n-1-a}{(n-2(a+1))^2}\rt)^{1-\frac{1+a}{n}}.$$
Moreover, equality in \eqref{eq:fLHS} holds if
$$u(r) =\lt(\frac{\lt(\frac{n}{1+a} -2\rt)^2}{2\lt(\frac{n}{1+a}-1\rt)\pi}\rt)^{\frac14}\lam^{\frac{n-2(1+a)}{2(1+a)}} (\lam r)^{\frac{-n}{2(1+a)}+1} \exp\lt(-\frac{\lt(\frac{n}{1+a} -2\rt)^2}{4\lt(\frac{n}{1+a}-1\rt)}\lt(\ln(\lam r)\rt)^2\rt),$$
for some $\lam > 0$.
\end{lemma}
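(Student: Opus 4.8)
The plan is to recognise \eqref{eq:fLHS} as the radial (one–dimensional) logarithmic Hardy inequality in the effective dimension $d:=n/(1+a)$, and to reduce it to the sharp one–dimensional Gaussian logarithmic Sobolev inequality, which is the content of Theorem $\mathbf{B}'$ in \cite{DDFT10}. First I would record that the hypotheses $-1<a<(n-2)/2$ are exactly equivalent to $d\in(2,\infty)$: $a>-1$ makes $1+a>0$, so $d$ is finite and positive, while $a<(n-2)/2$ reads $1+a<n/2$, i.e. $d>2$. With this notation every weight in \eqref{eq:fLHS} is a power of $r$ expressed through $d$ alone (the constraint carries $r^{d-3}$, both the left–hand side and the Dirichlet integral carry $r^{d-1}$, and the logarithm contains $u^2 r^{d-2}$), and the prefactor $\tfrac{n}{2(1+a)}$ equals $d/2$; so the inequality is genuinely a one–parameter family indexed by $d>2$.

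The key step is the Emden--Fowler change of variables $r=e^\tau$, $\tau\in\R$, combined with the homogeneity substitution $u(r)=r^{-(d-2)/2}g(\ln r)$. A direct computation, in which the cross term $\int_\R gg'\,d\tau$ vanishes after integration by parts, shows that this transformation sends the normalization $\int_0^\infty u^2 r^{d-3}\,dr=1$ to $\int_\R g^2\,d\tau=1$, the left–hand side of \eqref{eq:fLHS} to $\int_\R g^2\ln(g^2)\,d\tau$, and the Dirichlet energy to
\[
\int_0^\infty (u')^2 r^{d-1}\,dr=\int_\R (g')^2\,d\tau+\frac{(d-2)^2}{4}.
\]
Thus \eqref{eq:fLHS} becomes the statement that, for all $g$ with $\int_\R g^2\,d\tau=1$,
\[
\int_\R g^2\ln(g^2)\,d\tau\le \frac d2\ln\left(C\Big(\int_\R (g')^2\,d\tau+\tfrac{(d-2)^2}{4}\Big)\right),
\]
a shifted form of the one–dimensional Gaussian logarithmic Sobolev inequality; this is precisely Theorem $\mathbf{B}'$ of \cite{DDFT10}, and its extremisers are Gaussians in $\tau$, which is the source of the $\exp\big(-c(\ln(\lambda r))^2\big)$ profile in the claimed extremal.

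To pin down the constant I would start from the sharp Euclidean logarithmic Sobolev inequality on $\R$, namely $\int_\R g^2\ln(g^2)\,d\tau\le\frac12\ln\!\big(\tfrac{2}{\pi e}\int_\R(g')^2\,d\tau\big)$, for which equality holds on Gaussians, and optimise. Writing $I=\int_\R(g')^2\,d\tau$, the best $C$ is found by maximising $\frac12\ln\frac{2I}{\pi e}-\frac d2\ln\!\big(C(I+\tfrac{(d-2)^2}{4})\big)$ over $I>0$; the critical value is $I^\ast=(d-2)^2/\big(4(d-1)\big)$, which simultaneously fixes the Gaussian width and, after the back–substitution $\tau=\ln(\lambda r)$, produces both the exponential coefficient $\frac{(d-2)^2}{4(d-1)}$ of the stated extremiser and the value $C=\frac{4(d-1)}{d(d-2)^2}\big(\frac{(d-2)^2}{2\pi e(d-1)}\big)^{1/d}$.

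The main obstacle is the final bookkeeping. Since the reduced inequality depends only on $d$, the constant cannot really depend on $n$ and $a$ separately, yet $C(n,a)$ is written in a form that looks genuinely $n$–dependent. The delicate step is therefore to verify that, under the substitutions $1+a=n/d$, $1-\tfrac{1+a}{n}=\tfrac{d-1}{d}$, $n-1-a=n\tfrac{d-1}{d}$ and $n-2(a+1)=n\tfrac{d-2}{d}$, all the spurious $n$–factors in $C(n,a)$ cancel and the expression collapses exactly to the $d$–only constant obtained above — and, in parallel, that the optimal shift $I^\ast$ reproduces the power $(d-1)/d$ appearing in $C(n,a)$ and matches the extremal of the lemma term by term.
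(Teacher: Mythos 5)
Your proposal is correct, but it takes a genuinely different (and more self-contained) route than the paper: the paper disposes of this lemma in one line by citing Theorem $\mathrm{\bf B'}$ of \cite{DDFT10} with $\ga=\frac{n}{4(1+a)}$, whereas you essentially re-prove that theorem. Your reduction is sound: with $d=n/(1+a)\in(2,\infty)$ the Emden--Fowler substitution $u(r)=r^{-(d-2)/2}g(\ln r)$ does send the constraint to $\int_\R g^2\,d\tau=1$, the left side to $\int_\R g^2\ln(g^2)\,d\tau$, and the Dirichlet integral to $\int_\R (g')^2\,d\tau+\frac{(d-2)^2}{4}$ (the cross term vanishes), and optimizing the shifted one-dimensional Gaussian logarithmic Sobolev inequality over $I=\int_\R(g')^2\,d\tau$ gives $I^\ast=\frac{(d-2)^2}{4(d-1)}$ and $C=\frac{4(d-1)}{d(d-2)^2}\bigl(\frac{(d-2)^2}{2\pi e(d-1)}\bigr)^{1/d}$; the ``delicate bookkeeping'' you defer does check out, since under $1+a=n/d$ the stated $C(n,a)$ carries the net factor $n^{1-\frac1d-\frac{d-1}{d}}=n^0$ and collapses exactly to this $d$-only expression, and the Gaussian of width determined by $I^\ast$ pulls back to the stated extremal profile. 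Two small remarks: the reduced shifted inequality is not literally Theorem $\mathrm{\bf B'}$ of \cite{DDFT10} (that theorem is the radial logarithmic Hardy inequality itself, i.e.\ essentially this lemma in $d$-dimensional radial form; the shifted one-dimensional inequality is the engine of its proof), so your argument is best read as an independent derivation rather than a citation; and what your approach buys is precisely that independence, at the cost of having to supply the sharp one-dimensional logarithmic Sobolev inequality $\int_\R g^2\ln(g^2)\,d\tau\le\frac12\ln\bigl(\frac{2}{\pi e}\int_\R(g')^2\,d\tau\bigr)$ as an external input, while the paper's citation obtains the same statement with no computation.
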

\begin{proof}
This lemma is a consequence of Theorem $\mathrm{\bf B'}$ in \cite{DDFT10} with $\ga = \frac{n}{4(1+a)}$.
\end{proof}
Let us continue proving Theorem \ref{Radialcase}. Equality \eqref{eq:CHSr} is consequence of\eqref{eq:sup1} and \eqref{eq:fLHS} for the case $a =-(1+s)/{2}$. Moreover, the extremal functions are given by
$$u(x,t) = \lt(\frac{(n-1+s)^2}{2\pi \lt(\frac{2n}{1-s}-1\rt) \om_{n-1}^2}\rt)^{\frac14}\lam^{\frac{n-1+s}{2}}|\lam(x,t)|^{-\frac{n-1+s}2}\exp\lt(\frac{(n-1+s)^2}{4\lt(\frac{2n}{1-s}-1\rt)}\lt(\ln |\lam (x,t)|\rt)^2\rt),$$
for some $\lam >0$.

%%%%%%%%%%%%%%%%%%%%%%%%%%%%%%%%%%%%
\section*{Acknowledgments}
The author would like to thank anonymous referees for many valuable suggestions and comments which improve the presentation of this paper.

\end{document}